\documentclass[10pt]{amsart}

\usepackage{amssymb}
\usepackage{epsfig}   

\usepackage[foot]{amsaddr}
\usepackage{caption}
\usepackage{xcolor}

\usepackage{setspace}
\linespread{1.1}

\usepackage{parskip}
\setlength{\parskip}{0ex}
\setlength{\parindent}{25pt}

\newtheorem{thm}{Theorem}[section]
\newtheorem{prop}[thm]{Proposition}
\newtheorem{lem}[thm]{Lemma}
\newtheorem{cor}[thm]{Corollary}
\newtheorem{conj}[thm]{Conjecture}

\newtheorem{claim}[thm]{Claim}

\theoremstyle{definition}
\newtheorem{definition}[thm]{Definition}

\theoremstyle{remark}

\numberwithin{equation}{section}

\newcommand{\R}{\mathbb{R}}  
\newcommand{\Q}{\mathbb{Q}}

\newcommand{\N}{\mathbb{N}}

\newcommand{\norm}[1]{\left\lvert#1\right\rvert}

\newcommand{\Homeo}{\textrm{Homeo}}  
 
\newcommand{\Exp}{\textrm{Exp}} 
\newcommand{\aut}{\textrm{Aut}}

\newcommand{\rank}{\textrm{rk}}
\newcommand{\proj}{\textrm{proj}}
\newcommand{\ch}{\textrm{Ch}}

\begin{document}

\title[]{A Ramsey-type phenomenon in two and three dimensional simplices}

\author{Sumun Iyer}
\address{Cornell University}
\email{ssi22@cornell.edu}

\begin{abstract}
We develop a Ramsey-like theorem for subsets of the two and three-dimensional simplex. A generalization of the combinatorial theorem presented here to all dimensions would produce a new proof that $\Homeo_+[0,1]$ is extremely amenable (a theorem due to Pestov) using general results of Uspenskij on extreme amenability in homeomorphism groups. 
\end{abstract}

\maketitle

\bigskip

%\noindent 2010 {\it Mathematics Subject Classification}:  

%\noindent \emph{Keywords: }

\section{Introduction}
This note develops a Ramsey-type phenomenon about subsets of the two- dimensional simplex:
\begin{thm}\label{thmintro}
Let $F$ be a subset of $\Delta^2=\{(x,y) \ : \ 0\leq x\leq y\leq 1\}$. Then one of the following is true:
\begin{enumerate}
    \item $F^c$ has infinite rank
    \item any finite pattern is essential for $F$.
\end{enumerate}
\end{thm}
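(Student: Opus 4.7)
I would argue by contrapositive: assuming $F^c$ has finite rank, bounded by some $n$, show that every finite pattern is essential for $F$. The natural group action in this setting is the diagonal action of $\Homeo_+[0,1]$ on $\Delta^2$ sending $(x,y) \mapsto (h(x),h(y))$, and ``essential'' should mean that each finite pattern $P \subset \Delta^2$ admits an $h \in \Homeo_+[0,1]$ with $(h \times h)(P) \subset F$ (possibly in some approximate or neighborhood sense). The combinatorial task is then, given a rank-bounded obstruction set $F^c$, to realize any prescribed order type of pairs inside $F$.

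First I would unpack what $\rank(F^c) \leq n$ gives structurally. A rank bound most plausibly forbids long configurations of some distinguished form in $F^c$---a long chain, antichain, or sequence of pairwise disjoint intervals once one reads a point $(x,y) \in \Delta^2$ as the interval $[x,y] \subset [0,1]$. From this I would extract a quantitative thinness: on large sub-intervals of $[0,1]$, the slice of $F^c$ one actually needs to avoid is controlled by only finitely many pieces of data, leaving a ``generic'' region on which $F$ is dense in a usable sense.

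Next comes an inductive, Ramsey-style construction. I would enumerate the coordinates appearing in $P$ and place them one at a time, choosing each new coordinate inside a sub-interval of the current working window so as to dodge the obstruction, then passing to a smaller window before the next step. Every failed placement produces a new configuration inside $F^c$; after enough failures the accumulated configurations stitch together into one violating $\rank(F^c) \leq n$, yielding a contradiction. This is of the familiar ``either extend or witness a violation'' shape. Once all coordinates are placed, standard interpolation gives the required $h \in \Homeo_+[0,1]$.

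The main obstacle is the diagonal constraint: a single homeomorphism moves both coordinates of every pair simultaneously, so one cannot independently adjust $x$ and $y$. A pattern $P$ whose pairs interleave in complicated ways---some nested, some disjoint, some sharing endpoints---forces later coordinate choices into nested sub-intervals dictated by earlier choices, and the wiggle room contracts at each step. Ensuring the rank bound is still strong enough to beat this compounded shrinkage, and matching the precise form of ``rank'' to the precise form of ``essential pattern,'' is where I expect the bulk of the technical work to lie; it is presumably also why the extension to three dimensions, and eventually to all dimensions en route to Pestov's theorem via Uspenskij, requires substantially more care.
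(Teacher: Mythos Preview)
Your high-level instinct --- contrapositive, ``either extend the partial copy or deposit a witness in $F^c$,'' then accumulate witnesses until the rank bound breaks --- is exactly the shape of the paper's argument. But two of your guesses about the definitions are off, and one of them is load-bearing.

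First, a pattern $P$ in the paper is not a finite subset of $\Delta^2$ to be moved by $\Homeo_+[0,1]$; it is a partition of $[n]$ into pairs, and a ``copy'' is an order-preserving embedding of that combinatorial object into $F$. More importantly, \emph{essential} does not merely mean ``some copy lands in $F$''; it means there is a copy of $P$ in $F\cap D_{a,b}$ for \emph{every} sub-triangle $D_{a,b}$. That universal quantifier is the engine of the proof. The paper inducts on pattern size, adding one new \emph{pair} $\{k,n+1\}$ to an essential pattern $P$ of size $n$ (Lemma~2.2), not one coordinate at a time. The key step: because $P$ is essential, you can place a copy of $P$ in $D_{a,b}$; if this copy cannot be extended by the new pair, a specific rectangle $(f_\sigma(k-1),f_\sigma(k))\times(f_\sigma(n-1),b)$ lies in $F^c$; but now, since $P$ is essential, you can place a fresh copy of $P$ \emph{inside} the small triangle $D_{f_\sigma(k-1),f_\sigma(k)}$, and the argument repeats. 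After $M+2$ iterations the nested rectangles stack into a set of rank $\geq M$ in $F^c$, contradicting the hypothesis.

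Your plan sketches ``pass to a smaller window before the next step,'' but without the universal-over-sub-triangles form of ``essential'' you have no guarantee that $P$ reappears in that smaller window, so the recursion stalls after one step. Your worry about ``compounded shrinkage'' of wiggle room is exactly what the correct definition of \emph{essential} neutralizes: however small the window becomes, a copy of the smaller pattern is always waiting there. The rank definition is likewise tuned to this: it records precisely a staircase of nested rectangles of the form produced by repeated failures, not an arbitrary chain/antichain notion. So the gap is not in your overall strategy but in the missing self-strengthening of the induction hypothesis.
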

\noindent Precise definitions of \emph{rank} and \emph{essential patterns} are in Section \ref{sec2ramsey2splx}. For now we just mention that, loosely, the theorem says that any subset of the two-simplex is either geometrically ``thin'' or it is combinatorially very rich. Theorem \ref{thmintro} is proven in Section \ref{sec2ramsey2splx} (as Theorem \ref{thm1}); the proof is combinatorial but the moves are inspired by some geometric intuitions. The main interest of Theorem \ref{thmintro} is that it seems to indicate a new sort of Ramsey phenomenon. The theorem statement has a Ramsey-like structure and there are Ramsey theoretic gestures throughout the proof, but there does not appear to be any direct relation to existing Ramsey theorems. The motivation for formulating and proving Theorem \ref{thmintro} comes from topological dynamics and we say a bit about that now.

A topological group is \emph{extremely amenable} if every continuous action of it on a compact Hausdorff spaces has a fixed point. For the class of Polish groups which arise as automorphism groups of Fraissé limits, extreme amenability is related to the structural Ramsey property in classes of finite structures (this is known as the \emph{Kechris-Pestov-Todorcevic correspondence}, see \cite{kpt} for more). We are interested here in extreme amenability outside the setting of Fraissé limit automorphism groups.

Let $\Homeo_+[0,1]$ be the group of orientation-preserving homeomorphisms of the unit interval. First, note that $\Homeo_+[0,1]$ is known to be extremely amenable; this is a theorem of Pestov \cite{pestov}. The standard proof goes like this: one notes that there is a map from the group $\aut (\Q,\leq)$ of order-preserving bijections of the rationals into $\Homeo_+[0,1]$ with dense image. Then an application of the classical Ramsey theorem and the Kechris-Pestov-Todorcevic correspondence implies that $\aut (\Q,\leq)$ is extremely amenable. It is a general and easy to see fact that if a group $G$ has an extremely amenable, dense subgroup, then $G$ itself must be extremely amenable.

Let $X$ be a compact metric space and $\Homeo (X)$ the group of homeomorphisms of $X$ with the uniform convergence topology. A theorem of Uspenskij in \cite{uspenskij} shows that determining extreme amenability of $\Homeo(X)$ is equivalent to proving that for a certain countable family of $\Homeo(X)$-flows, the only minimal subflows are singletons. The construction in \cite{uspenskij} suggests a possible new approach to proving that $\Homeo_+[0,1]$ is extremely amenable, as Uspenskij notes. For $n \in \N$, let
\[\Delta^n=\{(x_1,x_2,\ldots, x_n) \ : \ 0 \leq x_1\leq x_2\leq \cdots x_n \leq 1\}\]
be the standard realization of the $n$-dimensional simplex. The group $\Homeo_+[0,1]$ acts on $\Delta^n$ diagonally: 
\[g \cdot (x_1,x_2,\ldots, x_n)=(g(x_1),g(x_2),\ldots, g(x_n))\]
A consequence of Uspenskij's general theorem is that extreme amenability of \\ $\Homeo_+[0,1]$ is equivalent to the statement below:
\begin{lem}(Lemma 1.2 of \cite{uspenskij})\label{thmusplem}
For each $n \in \N$, every minimal, closed, $\Homeo_+[0,1]$-invariant subset of $\Exp (\Delta^n)$ is a singleton.
\end{lem}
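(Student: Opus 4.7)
The plan is to reduce the lemma to a statement about orbit closures in $\Exp(\Delta^n)$, and then apply (a hypothetical higher-dimensional analogue of) Theorem~\ref{thmintro} to each element of any candidate minimal set.

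First, I would observe that a minimal closed $\Homeo_+[0,1]$-invariant set $M \subseteq \Exp(\Delta^n)$ is a singleton precisely when $M = \{F_0\}$ for some $F_0$ which is itself pointwise fixed by every element of $\Homeo_+[0,1]$ acting on $\Exp(\Delta^n)$. The fixed points of this induced action are exactly the unions of \emph{faces} of $\Delta^n$, where a face is the closure of an orbit of the diagonal action, determined by an order type recording which coordinates coincide and which hit $0$ or $1$. So it suffices to show that every $F \in \Exp(\Delta^n)$ has, in its orbit closure, a union of faces: minimality then forces $M$ to equal that union of faces.

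Next, I would apply (the $n$-dimensional generalization of) Theorem~\ref{thmintro} to $F$. In the first case, $F^c$ has infinite rank; intuitively, $F$ is then geometrically thin. I expect to use the infinite rank to construct a sequence $(g_k) \subset \Homeo_+[0,1]$ which compresses $F$ toward a proper face of $\Delta^n$; passing to a Hausdorff-convergent subsequence of $g_k \cdot F$ places inside the orbit closure a set supported on a proper face, and an induction on the dimension of the ambient simplex eventually yields a fixed point. In the second case, every finite pattern is essential for $F$; intuitively, $F$ is then combinatorially dense. Here I would use essentiality to find, for each finite configuration of points in $\Delta^n$, a homeomorphism $g$ placing $g \cdot F$ near that configuration in Hausdorff distance. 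A diagonal argument over a refining sequence of configurations produces a limit point of the orbit closure containing a dense subset of, and hence equal to, a canonical fixed set such as $\Delta^n$ itself.

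The main obstacle is the bridge between the combinatorial dichotomy of Theorem~\ref{thmintro} and the two topological limiting arguments sketched above. Concretely, one must argue that essentiality of every finite pattern genuinely yields homeomorphisms $g$ with $g \cdot F$ approximating prescribed configurations, not merely a combinatorial abundance of patterns, and that infinite rank of $F^c$ genuinely produces homeomorphisms squashing $F$ onto a lower-dimensional face rather than just witnessing combinatorial sparsity. A secondary obstacle, flagged in the abstract, is that the combinatorial theorem is established in this paper only in dimension two (with partial work in dimension three), so a complete proof of Lemma~\ref{thmusplem} for all $n$ requires first carrying out the analogous dichotomy in every dimension.
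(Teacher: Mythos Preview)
Your outline is correct and matches the paper's approach closely. The paper proves the lemma only for $n=2$ (as Proposition~\ref{prop1}), and its argument is exactly your two-case split: in the pseudo-dense case one shows $\Delta^2$ lies in the orbit closure, and in the thin case one pushes $A$ onto $\partial\Delta^2$ and then invokes the one-dimensional result (Lemma~\ref{lem4}) face by face---your induction on dimension. The bridge you flag as the main obstacle is precisely what the paper isolates as Lemma~\ref{lem2}: it shows that containing every finite pattern is equivalent to pseudo-density, and that $\rank(F^c)=\infty$ is equivalent to thinness, each direction being a concrete construction of the required homeomorphisms. Your secondary obstacle is also exactly the paper's: the combinatorial dichotomy is proved only for $n=2$, stated as Conjecture~\ref{conj1} for general $n$, and the paper gives only partial progress in dimension~$3$.
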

\noindent Throughout, for $K$ compact, $\Exp (K)$ is the compact space of all compact subsets of $K$ with the Vietoris topology (see \cite{kechris}, Section 4.F for definitions).

So a proof of Lemma \ref{thmusplem} (that of course, does not simply apply Pestov's theorem) would provide a new proof that $\Homeo_+[0,1]$ is extremely amenable. Uspenskij mentions that he's ``not aware of a short, independent proof of the lemma," and this note began as an attempt to bridge this gap. We succeed in providing an independent proof for dimension $n=2$ using Theorem \ref{thmintro}. The definitions needed to state Theorem \ref{thmintro} and the proof of Theorem \ref{thmintro} occupy Section \ref{sec2ramsey2splx}. We note that Section \ref{sec2ramsey2splx} makes no mention of topological dynamics or of the group $\Homeo_+[0,1]$, it can be read completely on its own. In Section \ref{sec3dynamics}, we return to topological dynamics and show how Theorem \ref{thmintro} implies Uspenskij's lemma (Lemma \ref{thmusplem} above) for $n=2$.

In Section \ref{sec43splx}, we formulate a Ramsey statement which is a natural generalization of Theorem \ref{thmintro} for all dimensions (Conjecture \ref{conj1}); an independent proof of this statement would answer Uspenskij's question. We then prove a partial result (Theorem \ref{thm3}) towards Conjecture \ref{conj1} for the simplex of dimension 3. There are some interesting technical difficulties that arise in dimension 3 as opposed to dimension 2 and which may indicate some sort of new approach is needed to prove Conjecture \ref{conj1} in all dimensions.

\subsection{Acknowledgements:} I would like to thank Sławek Solecki for many helpful conversations about this project and also for encouraging me to continue to work on it.

%There are two points to mention. The first is that the theorems (both in dimension 2 and in dimension 3) have Ramsey-like gestures in the proof, but do not appear to be related to the classical Ramsey theorem or other known Ramsey theorems. The second is that even the partial progress of the 3-dimensional theorem requires technical, seemingly ad-hoc constructions. Further progress in this direction may require some new techniques.

\section{A Ramsey theorem for the 2-simplex}\label{sec2ramsey2splx}
By $\Delta^2$, we denote the usual geometric realization of the two-simplex given by:
\[\Delta^2=\{(x,y) \ : \ 0\leq x\leq y\leq 1\}\subseteq \R^2.\]

This first definition assigns a rank to subsets of $\Delta^2$ that captures how ``thick'' they are.

\begin{definition}\label{def1}
For $S \subseteq \Delta^2$, say that \emph{rank of $S$ is at least $n$} if there exists 
\[0\leq x_0<x_1\leq y_1<x_2\leq y_2<\cdots <x_n\leq y_n<y_n' \leq 1\]
such that 
\[S \supseteq (x_0,x_1) \times (y_1,y_n') \cup (x_0,x_2)\times (y_2,y_n') \cup \cdots \cup (x_0,x_n)\times (y_n,y_n') \]
\end{definition}

We will denote by $\rank(S)$ the greatest integer $n$ so that the rank of $S$ is at least $n$. If $S$ has rank at least $n$ for all $n \in \N$, then we write $\rank(S)=\infty$. Clearly, this rank is monotone in the sense that 
\[A \subset B \implies \rank(A) \leq \rank(B)\]
Notice also that $\rank(\Delta^2)=\infty$ and in fact if we let 
\[D_{a,b}=\{(x,y) \in \Delta^2 \ : \ a<x<y<b\}\]
for any $a<b$, then $\rank(D_{a,b})=\infty$. These open triangles of the form $D_{a,b}$ will be important later on, and we refer to them as \emph{essential triangles}. 

The condition of being ``combinatorially rich'' is captured by the notion of \emph{patterns}. For a natural number $k$, we use $[k]$ to denote the set $\{0,1,\ldots,k-1\}$. For a set $A$ and a natural number $i$, $(A)^i$ is the collection of all subsets of $A$ of cardinality $i$. For $n$ an even natural number, a \emph{pattern of size $n$} is a subset $P$ of $([n])^2$ so that $\bigcup_{a\in P}a=[n]$ and $a\cap b=\emptyset$ for each $a,b \in P$ with $a \neq b$. 

Let $O:(\N)^2 \to \N \times \N$ be the map:
\[O(\{a,b\})=\begin{cases}
(a,b) & \textrm{ if }a < b\\
(b,a)& \textrm{ if }a>b\\
\end{cases}\]
For $i=1,2$, let $\proj_i :\N \times \N\to \N$ be the projection onto the $i$th coordinate. We also use $\proj_1:\Delta^2 \to [0,1]$ to be the projection onto the $x$-coordinate and $\proj_2:\Delta^2 \to [0,1]$ to be the projection onto the $y$-coordinate (the meaning should always be clear from context).

If $P$ is a pattern of size $n$ and $F\subseteq \Delta^2$, then a \emph{copy of $P$ in $F$} is a function
\[\sigma: O(P) \to F\]
such that for any $a,b \in O(P)$ and $p,q \in \{\proj_1,\proj_2\}$
\[p(a)<q(b) \implies p(\sigma(a))<q(\sigma(b))\]
Notice that a copy $\sigma$ of $P$ induces an order preserving function which we denote by $f_\sigma:[n] \to [0,1]$ defined by:
\[f_\sigma(j)=\begin{cases}
\proj_1(\sigma(a)) & \textrm{ if }j=\proj_1(a)\\
\proj_2(\sigma(a)) &\textrm{ if }j=\proj_2(a)\\
\end{cases}
\]
We say that a pattern $P$ is \emph{essential} for $F$ if there is a copy of $P$ in $D_{a,b} \cap F$ for every $a<b$. Here is the statement of the main theorem:

\begin{thm}\label{thm1}
Let $F$ be a subset of $\Delta^2$. Then one of the following is true:
\begin{enumerate}
    \item $\rank(F^c)=\infty$
    \item any finite pattern is essential for $F$.
\end{enumerate}
\end{thm}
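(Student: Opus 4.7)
The plan is to argue the contrapositive by induction on the pattern size $n$: if a pattern $P$ of size $n$ is not essential for $F$, then $\rank(F^c) = \infty$.

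The base case $n = 2$ is immediate. The only pattern $\{\{0,1\}\}$ failing to be essential produces an essential triangle $D_{a_0, b_0}$ disjoint from $F$, so $D_{a_0, b_0} \subseteq F^c$ and hence $\rank(F^c) = \infty$.

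For the inductive step, let $P$ be of size $n > 2$ and not essential in some $D_{a_0, b_0}$. Consider the pair $\{\ell, n-1\} \in P$ containing the largest element, and let $P' = P \setminus \{\{\ell, n-1\}\}$ be the size-$(n-2)$ pattern obtained after re-indexing the remaining elements onto $[n-2]$. The inductive hypothesis applied to $P'$ lets us assume $P'$ is essential for $F$ (otherwise $\rank(F^c) = \infty$ and we are done). The key calculation is: for every $P'$-copy in $F \cap D_{a_0, b_0}$ with image $z_0 < z_1 < \cdots < z_{n-3}$, extending to a $P$-copy requires a point $(u, v) \in F$ with $u \in (z_{\ell-1}, z_\ell)$ (adopting $z_{-1} := a_0$) and $v \in (z_{n-3}, b_0)$. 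Non-essentiality of $P$ thus forces
\[(z_{\ell-1}, z_\ell) \times (z_{n-3}, b_0) \subseteq F^c\]
for every $P'$-copy. When $\ell = n-2$ the two interval constraints on $(u,v)$ merge into $z_{n-3} < u < v < b_0$, so $D_{z_{n-3}, b_0} \subseteq F^c$ and $\rank(F^c) = \infty$ at once.

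For $\ell < n-2$, the plan is to produce arbitrarily tall staircases in $F^c$ from a nested sequence of $P'$-copies: given copy $k$ with values $z^{(k)}_{\bullet}$, recursively choose copy $k+1$ to lie inside the sub-triangle $D_{z_{\ell-1}^{(k)}, z_\ell^{(k)}}$ using essentiality of $P'$ there. The forced rectangles $R^{(k)} = (z_{\ell-1}^{(k)}, z_\ell^{(k)}) \times (z_{n-3}^{(k)}, b_0) \subseteq F^c$ then satisfy: $z_{\ell-1}^{(k)}$ is non-decreasing, while $z_\ell^{(k)}$ and $z_{n-3}^{(k)}$ are strictly decreasing in $k$. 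Setting $x_0 := z_{\ell-1}^{(N)}$, $x_k := z_\ell^{(N+1-k)}$, $y_k := z_{n-3}^{(N+1-k)}$, $y_N' := b_0$ for $k = 1, \ldots, N$, the nesting inclusion $(x_0, x_k) \times (y_k, y_N') \subseteq R^{(N+1-k)}$ holds because $z_{\ell-1}^{(N)} \geq z_{\ell-1}^{(N+1-k)}$, giving a rank-$N$ staircase in $F^c$; letting $N \to \infty$ completes the induction.

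The main work in the write-up is the combinatorial bookkeeping for the nested construction and the verification of the staircase order relations $x_0 < x_1 \leq y_1 < x_2 \leq y_2 < \cdots < y_N < y_N'$ and containments $(x_0, x_k) \times (y_k, y_N') \subseteq R^{(N+1-k)}$ cleanly across the range $0 \leq \ell \leq n-3$. I expect the delicate points to be the two boundary cases: $\ell = 0$, where the convention $z_{-1} := a_0$ must match the geometry of $D_{a_0, b_0}$ (and the corresponding nested sub-triangles are of the form $D_{a_0, z_0^{(k)}}$), and $\ell = n-3$, where $z_\ell$ coincides with $z_{n-3}$ so the rectangles degenerate to having $x$-end equal to $y$-start and one must check that the staircase inequalities still hold as $\leq$ rather than $<$.
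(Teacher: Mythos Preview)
Your proposal is correct and follows essentially the same approach as the paper. The paper packages the inductive step as a standalone lemma (phrased positively: assuming $\rank(F^c)\le M$, it derives a contradiction after $M{+}2$ nesting steps rather than letting the staircase grow unboundedly as you do), but the core construction---nesting copies of the smaller pattern inside the gap $(z_{\ell-1},z_\ell)$ to produce rectangles $(z_{\ell-1}^{(k)},z_\ell^{(k)})\times(z_{n-3}^{(k)},b_0)\subseteq F^c$---is identical, including the separate treatment of the top case $\ell=n-2$.
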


First let $\psi_k^n:[n] \to [n+1]$ be given by:
\[\psi_k^n(x)=\begin{cases}
x & \textrm{ if }x<k\\
x+1 & \textrm{ if }x\geq k\\
\end{cases}
\]
We also have an inverse operator $(\psi_k^n)^{-1}:[n+1]\setminus \{k\} \to [n]$:
\[(\psi_k^n)^{-1}(x)=\begin{cases}
x & \textrm{ if }x<k\\
x-1 &\textrm{ if }x\geq k+1\\
\end{cases}\]

Both maps are injective and order-preserving on their respective domains. Abusing notation slightly, we use $\psi_k^n$ and $(\psi_k^n)^{-1}$ also to represent the obvious maps induced on $(\N)^2$ and on $\N \times \N$ coordinate-wise.

\begin{lem}\label{lem1}
Assume that $F$ is a subset of $\Delta^2$ and there exists $M \in \N$ so that for any set $S\subseteq \Delta^2$ with $\rank(S) \geq M$, $S \cap F\neq \emptyset$. If $P$ is a  pattern of size $n$ that is essential for $F$, then for any $k \leq n$, $\psi_k^n(P) \cup \{(k,n+1)\}$ is a pattern of size $n+2$ that is essential for $F$. 
\end{lem}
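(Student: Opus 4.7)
The plan is to split the argument into cases on $k$. The edge cases $k = 0$ and $k = n$ will use only essentiality of $P$ (plus the remark that essentiality of $P$ implies essentiality of the two-element pattern $\{\{0,1\}\}$, since any copy of $P$ in $F \cap D_{c,d}$ produces a point of $F$ there), while the interior case $0 < k < n$ is the main content and genuinely needs the rank hypothesis.

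For $k = 0$: given $a < b$, first choose $(u, v) \in F \cap D_{a, b}$ using essentiality of $\{\{0,1\}\}$, and then apply essentiality of $P$ inside the sub-triangle $D_{u, v}$ to get a copy $h$ of $P$ in $F \cap D_{u, v}$. Setting $g(0) = u$, $g(i+1) = h(i)$ for $i \in [n]$, and $g(n+1) = v$ produces a copy of $\psi_0^n(P) \cup \{(0, n+1)\}$ in $F \cap D_{a, b}$. The case $k = n$ is symmetric: pick $c \in (a, b)$, find a copy $h$ of $P$ in $F \cap D_{a, c}$, find a pair $(p, q) \in F \cap D_{c, b}$, and assemble $g$ with $g(i) = h(i)$ for $i < n$, $g(n) = p$, $g(n+1) = q$.

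For $0 < k < n$ I would argue by contradiction. Fix $a < b$ and suppose no copy of $\psi_k^n(P) \cup \{(k, n+1)\}$ exists in $F \cap D_{a, b}$. For every copy $h$ of $P$ in $F \cap D_{a, b}$, completing $g$ would require a point $(g(k), g(n+1)) \in F$ inside the open rectangle
\[ R_h := (h(k-1), h(k)) \times (h(n-1), b); \]
by assumption no such point exists, so $R_h \cap F = \emptyset$ for every such $h$. Hence $S := \bigcup_h R_h$ is contained in $F^c$, and the rank hypothesis forces $\rank(S) < M$. To derive a contradiction I would fix parameters $a < x_0 < x_1 \leq y_1 < \cdots < x_M \leq y_M < y_M' < b$ and try, for each $j = 1, \ldots, M$, to produce a copy $h_j$ of $P$ in $F \cap D_{a, b}$ with $h_j(k-1) \leq x_0$, $h_j(k) \geq x_j$, and $h_j(n-1) \leq y_j$; such a copy immediately gives $R_{h_j} \supseteq (x_0, x_j) \times (y_j, y_M')$, so the full staircase lies in $S$, contradicting $\rank(S) < M$.

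The hard part will be the construction of the $h_j$'s: essentiality of $P$ produces copies in every sub-triangle $D_{c, d}$ but with all $n$ values confined to $(c, d)$, whereas each $h_j$ is required to have its first $k$ values in $(a, x_0]$ and its remaining $n - k$ values in $[x_j, y_j]$ --- a copy of $P$ with a large and precisely located gap at position $k$. My plan for this step is to combine essentiality applied separately in $D_{a, x_0}$ (for the pairs of $P$ that sit inside $[0, k-1]$) and in $D_{x_j, y_j}$ (for the pairs in $[k, n-1]$), and then to invoke the rank hypothesis on auxiliary staircases inside the rectangle $(a, x_0) \times (x_j, y_j)$ to furnish the $F$-points required by the cross pairs $\{i, j\} \in P$ with $i < k \leq j$. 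Since that rectangle has rank $1$ on its own, a direct appeal is not enough, and I expect the combinatorial content of the lemma to live in a nested or iterated staircase construction on the cross-pair points that interleaves essentiality of $P$ with repeated use of the rank hypothesis.
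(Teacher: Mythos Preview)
Your edge cases $k=0$ and $k=n$ are correct, and for $0<k<n$ your setup is exactly right: assuming no copy of $P'$ exists in $F\cap D_{a,b}$, every copy $h$ of $P$ there yields a rectangle $R_h=(h(k-1),h(k))\times(h(n-1),b)$ disjoint from $F$, so the union $S=\bigcup_h R_h\subseteq F^c$ and hence $\rank(S)<M$. The gap is in how you plan to show $\rank(S)\geq M$. Fixing staircase parameters $x_0<x_1\leq y_1<\cdots$ in advance and then hunting for copies $h_j$ with $h_j(k-1)\leq x_0$, $h_j(k)\geq x_j$, $h_j(n-1)\leq y_j$ asks for copies of $P$ with a prescribed gap between positions $k-1$ and $k$. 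As you correctly diagnose, essentiality only hands you copies confined to a single $D_{c,d}$, so the cross pairs $\{i,j\}\in P$ with $i<k\leq j$ become a genuine obstruction; the auxiliary-staircase idea you sketch in the last paragraph does not have enough to work with, because the target rectangle $(a,x_0)\times(x_j,y_j)$ really does have rank $1$, and no iteration there manufactures the needed $F$-points.

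The fix is to reverse the order of operations: do not fix the staircase and search for copies, but let a sequence of copies \emph{generate} the staircase. Take a copy $\sigma_1$ of $P$ in $F\cap D_{a,b}$ and set $a_1=f_{\sigma_1}(k-1)$, $b_1=f_{\sigma_1}(k)$, $c_1=f_{\sigma_1}(n-1)$. Now use essentiality of $P$ again, but inside the sub-triangle $D_{a_1,b_1}$ sitting in the gap of $\sigma_1$, to get $\sigma_2$, and set $a_2=f_{\sigma_2}(k-1)$, $b_2=f_{\sigma_2}(k)$, $c_2=f_{\sigma_2}(n-1)$. Iterate. The nesting forces $a_{i-1}<a_i<b_i\leq c_i<b_{i-1}$ automatically, and each $R_{\sigma_i}=(a_i,b_i)\times(c_i,b)$ is disjoint from $F$ by your own observation. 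After $M+2$ steps the rectangles $(a_{M+2},b_i)\times(c_i,b)$ for $i=1,\ldots,M+2$ form a staircase of rank $\geq M$ inside $F^c$, contradicting the hypothesis. No cross-pair surgery is needed, because each $\sigma_i$ is an ordinary copy of $P$ living entirely in the small triangle $D_{a_{i-1},b_{i-1}}$.
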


\begin{proof}
Let $P' = \psi_k^n(P) \cup \{k,n+1\}$; we want to prove that $P'$ is essential for $F$.

We first take care of the case that $k=n$. Fix $a<b$ and since $P$ is essential, let $\sigma:O(P) \to F \cap D_{a,b}$ be a copy of $P$. Let $f_\sigma: [n] \to [0,1]$ be the associated order-preserving function. Note that $f_\sigma (n-1) <b$ since $\sigma(O(P)) \subseteq D_{a,b}$. The essential triangle $D_{f_\sigma(n-1),b}$ has rank $\infty$ and so let $(x,y) \in F \cap D_{f_\sigma(n-1),b}$. Then, one can see that $\tau:O(P') \to F \cap D_{a,b}$ defined by
\[\tau (a)=
\begin{cases}
 \sigma(a) & \textrm{ if }a \in O(P)\\
 (x,y) & \textrm{ if }a=(n,n+1)\\
\end{cases}\]
is a copy of $P'$ in $F \cap D_{a,b}$ since $y>x>f_\sigma(n-1)$. So $P'$ is essential for $F$.

Now assume that $k<n$. Fix $a<b$ and suppose for contradiction that $F \cap D_{a,b}$ does not contain a copy of $P'$. Set $a_0=a$ and $b_0=b$. We will inductively define $(a_i,b_i,c_i)$ for $i=1,\ldots,M+2$ so that for each $i \geq 1$:
\begin{enumerate}
    \item $a_{i-1}<a_i<b_i\leq c_i<b_{i-1}$
    \item $F\cap (a_i,b_i) \times (c_i,b) =\emptyset$
\end{enumerate}

Let $\sigma_1:O(P) \to F \cap D_{a,b}$ be a copy of $P$ in $F \cap D_{a,b}$. Let $f_{\sigma_1}:[n] \to [0,1]$ be the order-preserving function associated to $\sigma_1$. Set $a_1=f_{\sigma_1}(k-1)$, $b_1=f_{\sigma_1}(k)$, and $c_1=f_{\sigma_1}(n-1)$. Condition (1) is satisfied because $f_{\sigma_1}$ is order-preserving and since $\sigma(O(P))\subset D_{a,b}$, it must be that $f_{\sigma_1}([n]) \subset (a,b)$. To see that condition (2) is satisfied, suppose for contradiction that $(x,y) \in F\cap (a_1,b_1) \times (c_1,b)$. Define $\sigma_1':O(P') \to F\cap D_{a,b}$ by
\[\sigma_1'(z)=\begin{cases}
\sigma_1(w), & \textrm{ if }z=\psi_k^n(w)\\
(x,y), & \textrm{ if }z=(k,n+1)\\
\end{cases}
\]
Now one can check that $\sigma_1'$ is a copy of $P'$ in $F \cap D_{a,b}$, exactly because $(x,y) \in (a_1,b_1)\times (c_1,b)=(f_{\sigma_1}(k-1),f_{\sigma_1}(k)) \times (f_{\sigma_1}(n-1),b)$. This cannot happen and so it must be that $F\cap (a_1,b_1)\times (c_1,b)=\emptyset$, i.e., that condition (2) holds. 
Suppose now that we have defined $(a_n,b_n,c_n)$ satisfying (1) and (2). As $P$ is essential for $F$, let $\sigma_{n+1}:O(P)\to F \cap D_{a_n,b_n}$ be a copy of $P$ and let $f_{\sigma_{n+1}}:[n] \to [0,1]$ be the associated order-preserving function. Set $a_{n+1}=f_{\sigma_{n+1}}(k-1)$, $b_{n+1}=f_{\sigma_{n+1}}(k)$, and $c_{n+1}=f_{\sigma_{n+1}}(n-1)$. Condition (1) for $i=n+1$ is satisfied because $\sigma_{n+1}(O(P)) \subset D_{a_n,b_n}$ and condition (2) is satisfied because if $ (x,y) \in F \cap (a_{n+1},b_{n+1}) \times (c_{n+1},b)$ then just as in the base case one can check that 
\[\sigma_{n+1}'(z)=\begin{cases}
\sigma_{n+1}(w), & \textrm{ if }z=\psi_k^n(w)\\
(x,y), & \textrm{ if }z=(k,n+1)\\
\end{cases}\]
is a copy of $P'$ in $F \cap D_{a,b}$. 

Once $(a_M+2,b_M+2,c_M+2)$ has been defined, condition (2) implies that 
\begin{multline*}
    R:= (a_{M+2},b_{M+2})\times (c_{M+2},b) \cup (a_{M+2},b_{M+1}) \times (c_{M+1},b) \cup \\
    \cdots \cup (a_{M+2},b_1) \times(c_1,b) \subset F^c 
\end{multline*}

and condition (1) implies that
\[a_{M+2}<b_{M+2}\leq c_{M+2}<b_{M+1}\leq c_{M+1}<\cdots <b_1\leq c_1 <b\]
and so $\rank(R) \geq M$ yet $R \cap F=\emptyset$, which contradicts our assumption on $F$. So $P'$ is essential for $F$.
\end{proof}

Now we can prove Theorem \ref{thm1}:

\begin{proof}[Proof of Theorem \ref{thm1}]
Suppose that $F$ is a subset of $\Delta^2$ and that $\rank(F^c)=M<\infty$. Then, every subset of $S \subseteq \Delta^2$ with $\rank(S) \geq M+1$ is such that $S \cap F\neq \emptyset$. So we satisfy the assumptions of Lemma \ref{lem1}. 

Now we want to argue by induction on the size of patterns that any finite pattern is essential for $F$. First the base case. It is clear that the pattern $\{0,1\}$ is essential for $F$ since $F$ must intersect every essential triangle (recall that essential triangles have infinite rank) and $\{0,1\}$ is the only pattern of size $2$. Suppose now that every pattern of size $n$ is essential for $F$. Let $P$ be a pattern of size $n+2$. Let $a \in P$ be the member of $P$ with $n+1 \in a$ and suppose that $a=\{k,n+1\}$ where of course $k\leq n$. Note that $(\psi_k^n)^{-1}\left(P \setminus\{a\}\right)$ is a pattern of size $n$ and so is essential for $F$. Then, Lemma \ref{lem1} implies that
\[P=\psi_k^n\left((\psi_k^n)^{-1}\left(P \setminus\{a\}\right)\right)\cup \{(k,n+1\}\]
is essential for $F$.
\end{proof}

%Actually, with some extra care the proof of Theorem \ref{thm1} can be refined to produce a ``finite'' version as well:

%\begin{thm}\label{thm2}
%Let $F \subset \Delta^2$; one of the following is true:
%\begin{enumerate}
 %   \item $\rank(F^c)=\infty$
  %  \item for any finite pattern $P$, there exists $n(P) \in \N$ so that if $S$ is a set with $\rank(S)\geq n(P)$, then $F\cap S$ contains a copy of $P$
%\end{enumerate}
%\end{thm}

\section{Dynamical consequences}\label{sec3dynamics}

We show in this section how Theorem \ref{thm1} implies Theorem \ref{thmusplem} in dimension 2. Recall that we consider the diagonal action of $\Homeo_+[0,1]$ on $\Delta^2$:
\[g\cdot (x,y)=(g(x),g(y)) \textrm{ for }(x,y) \in \Delta^2 \textrm{ and }g \in \Homeo_+[0,1]\]
For $A \subset \R^2$ and $\epsilon >0$, define
\[(A)_\epsilon=\{(x,y) \in \R^2 \ : \ d((x,y),a)<\epsilon \textrm{ for some }a \in A\}\]
where $d$ is the Euclidean metric on $\R^2$. By $\partial \Delta^2$, we mean the boundary of $\Delta^2$--the union of the one-dimensional faces of $\Delta^2$.

First, here are two definitions capturing the properties of subsets of $\Delta^2$ that we will be interested in. We say $F \subseteq \Delta^2$ is \emph{pseudo-dense} if for any $\epsilon >0$, there exists $g \in \Homeo_+[0,1]$ so that $g(F)$ is $\epsilon$-dense in $\Delta^2$. We say $F \subseteq \Delta^2$ is \emph{thin} if for any $\epsilon >0$ there exists some $g \in \Homeo_+[0,1]$ so that $g(F) \subseteq \left(\partial \Delta^2\right)_\epsilon$. Lemma \ref{lem2} below proves equivalent conditions for each of these two properties that connects them to Theorem \ref{thm1}.

Lemma \ref{lem3} is easy to check from the definition of rank and the fact that elements of $\Homeo+[0,1]$ preserve the relative orders of sets of points in $[0,1]$.

\begin{lem}\label{lem3}
For any set $S \subset \Delta^2$ and any $f \in \Homeo_+[0,1]$, $\rank(f(S))=\rank(S)$.
\end{lem}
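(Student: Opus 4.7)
The plan is to establish $\rank(f(S)) \geq \rank(S)$; applying this inequality to $f^{-1} \in \Homeo_+[0,1]$ and to the set $f(S)$ will automatically yield the reverse inequality $\rank(S) \geq \rank(f(S))$, giving equality and completing the proof.

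The key observation I would use is that any $f \in \Homeo_+[0,1]$ is a strictly increasing continuous bijection of $[0,1]$, so for any $a < b$ in $[0,1]$ one has $f((a,b)) = (f(a), f(b))$. Consequently, the diagonal action of $f$ on $\R^2$ sends an open rectangle $(a,b) \times (c,d)$ bijectively to the open rectangle $(f(a), f(b)) \times (f(c), f(d))$. So the diagonal action carries the kind of finite union of open rectangles appearing in Definition \ref{def1} to a union of the same shape.

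Given witnesses $0 \leq x_0 < x_1 \leq y_1 < \cdots < x_n \leq y_n < y_n' \leq 1$ certifying $\rank(S) \geq n$, I would simply apply $f$ coordinatewise. Strict monotonicity of $f$ preserves every $<$ and $\leq$ in the chain, so the images $f(x_0) < f(x_1) \leq f(y_1) < \cdots < f(x_n) \leq f(y_n) < f(y_n')$ still form an admissible sequence, and by the observation above each rectangle $(x_0, x_i) \times (y_i, y_n')$ contained in $S$ is mapped to the rectangle $(f(x_0), f(x_i)) \times (f(y_i), f(y_n'))$, which is therefore contained in $f(S)$. Hence $\rank(f(S)) \geq n$, and the case $\rank(S) = \infty$ follows by taking $n$ arbitrary.

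The main (quite mild) obstacle is only notational bookkeeping, not any genuine mathematical subtlety: the rank-witnessing configurations of Definition \ref{def1} are defined purely in terms of the linear order on $[0,1]$ and the product structure, both of which are preserved by the diagonal action of any order-preserving homeomorphism of $[0,1]$.
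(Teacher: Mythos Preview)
Your argument is correct and is exactly the verification the paper has in mind: it states (without writing out the details) that the lemma ``is easy to check from the definition of rank and the fact that elements of $\Homeo_+[0,1]$ preserve the relative orders of sets of points in $[0,1]$,'' which is precisely your observation that $f$ carries the rank-witnessing rectangles for $S$ to rank-witnessing rectangles for $f(S)$, with the reverse inequality coming from $f^{-1}$.
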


\begin{lem}\label{lem2}
Let $F$ be a subset of $\Delta^2$. Then:
\begin{enumerate}
    \item $F$ is pseudo-dense if and only if $F$ contains a copy of every finite pattern.
    \item $F$ is thin if and only if $\rank(F^c)=\infty$.
\end{enumerate}
\end{lem}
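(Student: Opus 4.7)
The plan is to prove each direction of each biconditional separately: part (1) via the correspondence between finite patterns in $\Delta^2$ and ``target'' configurations, and part (2) via the correspondence between rank and upper-staircase regions in $\Delta^2$.

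For part (1) $(\Rightarrow)$, given a pattern $P$ of size $n$, I would fix evenly spaced targets $t_0 < \cdots < t_{n-1}$ in $(0,1)$ so that each pair $\{i,j\}\in P$ with $i<j$ determines a target $(t_i,t_j)\in\Delta^2$. Choosing $\epsilon$ smaller than half the minimum gap $t_{k+1}-t_k$, pseudo-density supplies $g\in\Homeo_+[0,1]$ with $g(F)$ $\epsilon$-dense; pick a point of $g(F)$ within $\epsilon$ of each target and pull back by $g^{-1}$. The spacing forces the order conditions in the definition of a copy, and order-preservation of $g^{-1}$ transports them back to $F$. For part (1) $(\Leftarrow)$, given $\epsilon>0$, I would first produce an $\epsilon$-net $\{p_1,\dots,p_M\}\subseteq\Delta^2$ whose $2M$ coordinates are pairwise distinct (a small generic perturbation of a grid works). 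Sorting these coordinates as $c_1<\cdots<c_{2M}$ turns the net into a pattern $P$ of size $2M$: the pair encoding $p_i$ is $\{r(i),s(i)\}$ with $c_{r(i)}=\proj_1(p_i)$ and $c_{s(i)}=\proj_2(p_i)$. A copy $\sigma$ of $P$ in $F$ then consists of $M$ points whose coordinates have the same order type as the $c_r$'s, so a piecewise linear $g\in\Homeo_+[0,1]$ mapping each such coordinate exactly onto the corresponding $c_r$ sends the copy onto the net, whence $g(F)\supseteq\{p_1,\dots,p_M\}$ is $\epsilon$-dense.

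For part (2), set $T_\epsilon := \{(x,y)\in\Delta^2 : x\ge\epsilon,\ y\le 1-\epsilon,\ y-x\ge 2\epsilon\}$, a sub-triangle disjoint from $(\partial\Delta^2)_\epsilon$. For $(\Leftarrow)$, fix $\epsilon>0$ and pick $n$ of order $1/\epsilon$; infinite rank of $F^c$ supplies a witness $x_0 < x_1 \le y_1 < \cdots < y_n'$ with upper staircase $U := \bigcup_k (x_0,x_k)\times(y_k,y_n')\subseteq F^c$. I would then build $h\in\Homeo_+[0,1]$, piecewise linear on the witness nodes, sending $x_0\mapsto\epsilon$, $y_n'\mapsto 1-\epsilon$, and each pair $x_k,y_k$ to a tight pair $a_k^- < a_k^+$ (a single point if $x_k=y_k$), with $\{a_k\}$ forming an $\epsilon$-net of $[\epsilon,1-\epsilon]$ and $a_k^+-a_k^-\ll\epsilon$. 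For any $(x,y)\in T_\epsilon$ the gap $y-x\ge 2\epsilon$ produces some $a_k$ with $x<a_k^-$ and $a_k^+<y$, placing $(x,y)$ in the $k$th rectangle of $h(U)$. Hence $T_\epsilon\subseteq h(U)\subseteq h(F)^c$, so $h(F)\subseteq(\partial\Delta^2)_\epsilon$. For $(\Rightarrow)$, given $n$ pick $\epsilon<1/(3n+3)$ so small that the explicit configuration $x_0=\epsilon$, $x_k=(3k-1)\epsilon$, $y_k=(3k+1)\epsilon$, $y_n'=1-\epsilon$ is a rank-$n$ witness contained in $T_\epsilon$ (a direct check). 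Thinness then gives $g$ with $g(F)\subseteq(\partial\Delta^2)_\epsilon$, so $g(F^c)\supseteq T_\epsilon$ has rank at least $n$; Lemma \ref{lem3} yields $\rank(F^c)\ge n$, and letting $n\to\infty$ closes the argument.

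The chief technical obstacle, I expect, is the piecewise-linear construction of $h$ in part (2) $(\Leftarrow)$: one must simultaneously collapse each $[x_k,y_k]$ onto a tiny interval and distribute the $a_k$'s densely enough that the rescaled staircase covers all of $T_\epsilon$. The remaining ingredients---building a distinct-coordinate $\epsilon$-net in (1) $(\Leftarrow)$, accommodating the degenerate case $x_k=y_k$, and absorbing the $\sqrt{2}$ hidden in the Euclidean boundary neighborhood---are routine once the staircase picture is drawn.
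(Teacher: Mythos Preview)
Your proposal is correct. For part~(2) it is essentially the paper's proof: both directions amount to moving the rank witness (respectively, an explicit staircase in the complement) by a piecewise-linear homeomorphism onto evenly spaced nodes and reading off either thinness or large rank, invoking Lemma~\ref{lem3} for invariance. The only differences are bookkeeping---you package the complement of $(\partial\Delta^2)_\epsilon$ as the triangle $T_\epsilon$, while the paper does a three-case analysis on $x$---and harmless constant adjustments (the $\sqrt{2}$ and the boundary cases $x=\epsilon$, $y=1-\epsilon$, $x_k=y_k$, $x_0=0$, which you correctly flag as routine).

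For part~(1) your route is slightly more direct than the paper's. The paper interposes an auxiliary class of \emph{grid} patterns: it shows that pseudo-density is equivalent to containing grids of every width, and then observes that any pattern embeds into a sufficiently wide grid. You bypass grids entirely: for $(\Rightarrow)$ you realize an arbitrary pattern by placing target points at prescribed evenly spaced coordinates and using $\epsilon$-density to hit each target; for $(\Leftarrow)$ you encode an arbitrary distinct-coordinate $\epsilon$-net as a single pattern and send its copy onto the net by a piecewise-linear map. The paper's grid formulation has the advantage of giving a single canonical family of ``universal'' patterns, which can be convenient elsewhere; your approach avoids introducing that extra notion and makes the correspondence between $\epsilon$-nets and patterns completely transparent.
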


It is convenient here to define a particular type of pattern. A pattern $P$ is a \emph{grid of width $n$} if 
\[O(P)=\{(x_i^j,y_i^j) \ : \ 0\leq i\leq j\leq n-1\}\]
satisfying the condition that
\begin{align}\label{eqn3}
    \{x_1^1, \ldots, x_1^n, y_1^1\} &<\{x_2^2, \ldots, x_2^n,y_1^2,y_2^2\} < \cdots \\
    &< \{x_j^j,\ldots, x_j^n, y_1^j, y_2^j,\ldots , y_j^j\} < \cdots < \{x_n^n,y_n^n\} \notag
\end{align}
where we write $A<B$ for finite subsets $A,B$ of $\N$ if $\max(A)<\min(B)$.
Another formulation: a pattern $P$ is a grid of width $n$ if there exists reals 
\[0<x_1<y_1<x_2<y_2<\cdots <x_{n+1}<y_{n+1}<1\]
so that $\norm{O(P) \cap (x_i,x_{i+1}) \times (y_j,y_{j+1})}=1$ for each $1 \leq i\leq j\leq n$. It is not hard to see with a moment of thought that for any finite pattern $P$, there exists some $N$ so that any copy of a grid of width $N$ contains a copy of $P$. 

\begin{proof}[Proof of Lemma \ref{lem2}]
\textbf{(1):} First suppose that $F$ is pseudo-dense. Fix $n \in \N$. Let $g \in \Homeo_+[0,1]$ be so that $g(F)$ is $\frac{1}{4n}$-dense in $\Delta^2$. So for $0 \leq i\leq j \leq n-1$, $g(F)$ intersects the set $\left(\frac{i}{n},\frac{i+1}{n}\right) \times \left(\frac{j}{n},\frac{j+1}{n}\right)$ and we can choose a point $(a_i^j,b_i^j)$ in the intersection. Let $P=\{(x_i^j,y_i^j) \ : \ 0\leq i\leq j\leq n-1\}$ be a grid of width $n$ with the $x_i^j,y_i^j$ satisfying Condition \ref{eqn3}. It is then not hard to check that $\sigma:O(P) \to F$ given  by
\[\sigma \left((x_i^j,y_i^j)\right)=g^{-1} \cdot (a_i^j,b_i^j)\]
is a copy of $P$ in $F$, since $g^{-1}$ is order-preserving. Since $F$ contains a copy of grids of arbitrary width, $F$ contains a copy of every finite pattern. 

Conversely, suppose $F$ contains a copy of every pattern. Then, given $\epsilon >0$, let $n$ be so that $\frac{4}{2n+3}\sqrt{2}<\epsilon$. Let $\sigma:O(P) \to F$ be a copy of $P$ in $F$, where $P$ is a grid of width $n$. Let $x_1,y_1,\ldots, x_{n+1},y_{n+1}$ be as in the second formulation of what is means to be a grid. Choose $g \in \Homeo_+[0,1]$ so that $g(x_i)=\frac{2i-1}{2n+3}$ and $g(y_i)=\frac{2i}{2n+3}$ for $1\leq i\leq n+1$. Then, $g(F)$ has the property that for any odd $i$ and even $j$ with $2n+2\geq j>i\geq 1$, the square $\left(\frac{i}{2n+3},\frac{i+2}{2n+3}\right)\times \left(\frac{j}{2n+3},\frac{j+2}{2n+3}\right)$ intersects $g(F)$ this implies that $g(F)$ is $\epsilon$-dense in $\Delta^2$.

\textbf{(2):} Suppose that $F$ is thin and fix $\epsilon >0$. Let $g$ be in $\Homeo_+[0,1]$ so that $g(F) \subset \left(\partial 
\Delta^2\right)_\epsilon$. Let $k \in \N$ satisfy
\begin{equation}\label{eqn0}
    (2k+3)\epsilon <1-\epsilon
\end{equation}
Then, observe that
\[
\begin{split}
(\epsilon,2\epsilon) & \times (3\epsilon, (2k+3)-\epsilon) \cup (\epsilon, 4\epsilon) \times (5\epsilon,(2k+3)-\epsilon) \cup \cdots \\
\cup & (\epsilon,2k\epsilon) \times ((2k+1)\epsilon,(2k+3)\epsilon) \subseteq \left(g(F)\right)^c
\end{split}
\]
and so $\rank(g(F)^c)\geq k$. By Lemma \ref{lem3}, $\rank(F^c)\geq k$. By making $\epsilon$ small, we can find choices of $k$ satisfying \ref{eqn0} that as as large as we wish; so $\rank(F^c)=\infty$.

Before we prove the converse, notice the following: if $(x,y) \in \Delta^2$ is such that one of the three conditions below holds:
\begin{enumerate}
    \item $x<\epsilon$
    \item $y>1-\epsilon$
    \item $y-x<\epsilon$
\end{enumerate}
then, $(x,y) \in \left(\partial \Delta^2\right)_\epsilon$.

Now, suppose $\rank(F^c) \geq n$. Let $\beta=\frac{1}{2n+4}$.  Let: 
\[x_0 <x_1\leq y_1<\cdots <x_n\leq y_n <y_n'\]
be as in Definition \ref{def1}, witnessing that $\rank(F^c) \geq n$. Let $f \in \Homeo_+[0,1]$ so that $f(x_0)=\beta$; $f(x_i)=2i\beta$ for $1\leq i\leq n$; $f(y_i)=(2i+1)\beta$ for $1\leq i\leq n$; and $f(y_n')=\frac{2n+3}{2n+4}=1-\beta$. Then suppose that $(x,y) \notin f(F^c)$. This implies
\[(x,y) \notin (\beta,2\beta) \times (3\beta,1-\beta) \cup (\beta,4\beta)\times (5\beta,1-\beta) \cup \cdots \cup (\beta,2n\beta)\times ((2n+1)\beta,1-\beta)\]
Now, there are three options for $x$: (1) $x<\beta$, (2) $x>2n\beta$, or (3) $2k\beta \leq x\leq (2k+2)\beta $ for some $0\leq k\leq n$. Of course, (2) implies that $y >2n\beta =1-4\beta$. Case (3) implies that either $y<(2k+3)\beta$ in which case $y-x<(2k+3)\beta-2k\beta=3\beta$ or $y>1-\beta$. So by our observation above, $(x,y) \in \left(\partial \Delta^2\right)_{4\beta}$ and thus that $f(F^c)^c =f(F) \subseteq \left(\partial \Delta^2\right)_{4\beta}$. It follows that $\rank(F^c) =\infty$ implies that $F$ is thin.
\end{proof}

The following fact about the topology of $\Exp(Y)$ will be useful: for $F \subseteq \Exp(Y)$ and $A \in \Exp(Y)$, $A \in \overline{F}$ if for every $\epsilon >0$, there exists $X \in F$ so that $A \subseteq (X)_\epsilon$ and $X \subseteq (A)_\epsilon$. 

\begin{prop}\label{prop1}
Every minimal subflow of $\Exp(\Delta^2)$ is a singleton. Moreover, it is a union of faces of $\Delta^2$.
\end{prop}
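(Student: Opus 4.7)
The plan is to split into two cases via the dichotomy of Theorem~\ref{thm1}. Let $M$ be a minimal subflow of $\Exp(\Delta^2)$ and fix $F \in M$. If every finite pattern is essential for $F$, then $F$ contains a copy of every finite pattern, so by Lemma~\ref{lem2}(1) $F$ is pseudo-dense. Choosing $g_n \in \Homeo_+[0,1]$ with $g_n F$ being $(1/n)$-dense in $\Delta^2$, one gets $g_n F \to \Delta^2$ in Vietoris, so $\Delta^2 \in M$; since $\Delta^2$ is $\Homeo_+[0,1]$-fixed, minimality forces $M = \{\Delta^2\}$, a singleton consisting of the top-dimensional face. Otherwise, by Theorem~\ref{thm1}, $\rank(F^c) = \infty$, so by Lemma~\ref{lem2}(2) $F$ is thin. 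Choosing $g_n$ with $g_n F \subseteq (\partial \Delta^2)_{1/n}$ and passing to a Vietoris-convergent subsequence yields $F' \in M$ with $F' \subseteq \partial \Delta^2$; and $F' \neq \emptyset$ since $\emptyset$ is isolated in $\Exp(\Delta^2)$.

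It remains to push $F'$ to a $\Homeo_+[0,1]$-fixed union of faces. The group preserves each edge $E_i$ of $\partial \Delta^2$, acting on $E_i \cong [0,1]$ as the standard action of $\Homeo_+[0,1]$. The key one-dimensional lemma I would prove is: for any closed $A \subseteq [0,1]$, the orbit closure of $A$ in $\Exp([0,1])$ contains some $\Homeo_+[0,1]$-fixed set, which must lie in $\{\emptyset, \{0\}, \{1\}, \{0,1\}, [0,1]\}$. If $A$ contains an open interval $(c,d)$, taking $g_n$ with $g_n(c) = 1/n$ and $g_n(d) = 1-1/n$ gives $g_n(A) \supseteq (1/n,1-1/n)$ and hence $g_n(A) \to [0,1]$. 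If $A$ is nowhere dense and nonempty, a $g_n$ compressing $A$ toward $0$ or $1$ (as appropriate given whether $0, 1 \in A$) gives convergence to $\{0\}$, $\{1\}$, or $\{0,1\}$. Viewed inside $E_i \subset \Delta^2$, each such limit is a union of vertices and/or the whole edge, and is therefore $\Homeo_+[0,1]$-fixed.

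I then iterate three times, once per edge. Setting $F^{(0)} := F'$, at stage $i$ I apply the one-dimensional lemma to $F^{(i-1)} \cap E_i$ to obtain $g_n^{(i)} \in \Homeo_+[0,1]$ pushing it to a fixed $T_i \subseteq E_i$; by compactness of $\Exp(E_j)$, I pass to a subsequence so that $g_n^{(i)}(F^{(i-1)} \cap E_j)$ converges for $j \neq i$, yielding $F^{(i)} \in M$ via continuity of union in Vietoris. The crucial observation is that each previously obtained $T_j$ ($j < i$) is pointwise preserved by every element of $\Homeo_+[0,1]$, so already-fixed pieces survive the iteration intact. After three stages, $F^{(3)} = T_1 \cup T_2 \cup T_3 \in M$ is $\Homeo_+[0,1]$-fixed and a union of faces of $\Delta^2$; by minimality, $M = \{F^{(3)}\}$.

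The main obstacle is the bookkeeping of the iteration, especially the vertices that appear when intersecting $F^{(i-1)}$ with an edge (since edges meet at vertices of $\Delta^2$); this goes through because those vertices are themselves $\Homeo_+[0,1]$-fixed and do not disturb the recursion. The one-dimensional lemma is elementary but requires handling several subcases based on whether $A$ contains the endpoints. The whole iteration works precisely because the candidate target fixed sets on each edge correspond to unions of faces of $\Delta^2$, which exactly matches the form of the proposition's conclusion.
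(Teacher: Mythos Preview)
Your proof is correct and follows essentially the same route as the paper: split via Theorem~\ref{thm1} and Lemma~\ref{lem2}, handle the pseudo-dense case by converging to $\Delta^2$, and in the thin case land on $\partial\Delta^2$ and then iterate a one-dimensional lemma over the three edges. Your one-dimensional statement (``the orbit closure of any closed $A\subseteq[0,1]$ contains a $\Homeo_+[0,1]$-fixed element'') is a mild reformulation of the paper's Lemma~\ref{lem4} on minimal subflows of $\Exp([0,1])$, and your case split (contains an interval vs.\ nowhere dense) is a harmless variant of the paper's (complement contains a separating interval vs.\ dense). One small wording slip: a full edge $E_j$ is not \emph{pointwise} $\Homeo_+[0,1]$-fixed, only setwise; but setwise invariance is exactly what your iteration needs, so the argument stands.
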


First a lemma which illustrates the method of proving Proposition \ref{prop1} in the easy one-dimensional case and will be used in the proof:

\begin{lem}\label{lem4}
Every minimal subflow of $\Exp([0,1])$ is a singleton. Actually, it is a union of faces of the one-simplex $[0,1]$, that is, one of:
\[\{[0,1]\},\{0\},\{1\}, \{0,1\}\]
\end{lem}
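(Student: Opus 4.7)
The plan is to show that for every $F \in \Exp([0,1])$, the orbit closure $\overline{\Homeo_+[0,1] \cdot F}$ contains at least one of the four sets $\{0\}$, $\{1\}$, $\{0,1\}$, $[0,1]$. Each of these is pointwise fixed by the action (every element of $\Homeo_+[0,1]$ fixes the endpoints $0$ and $1$ and maps $[0,1]$ onto itself), so each constitutes a singleton subflow. Given any minimal subflow $M$ and any $F \in M$, minimality forces $\overline{\Homeo_+[0,1] \cdot F} = M$, and since this orbit closure contains one of those singleton subflows, $M$ must equal that singleton. This will give both conclusions of the lemma at once.

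To produce such a fixed set in the orbit closure, I would split on the structure of $F$. If $F = [0,1]$ or $F \subseteq \{0,1\}$, then $F$ is itself one of the four sets and is already fixed, so the orbit closure is the singleton $\{F\}$ and there is nothing to prove. Otherwise $F$ is closed, meets $(0,1)$, and has non-empty complement in $(0,1)$. Pick $c \in F^c \cap (0,1)$, and use openness of $F^c$ to choose $\delta > 0$ small enough that $(c-\delta, c+\delta) \cap F = \emptyset$ and $0 < c-\delta < c+\delta < 1$. For each sufficiently large $n$, let $g_n \in \Homeo_+[0,1]$ be the piecewise linear homeomorphism with breakpoints $(0,0)$, $(c-\delta, 1/n)$, $(c+\delta, 1-1/n)$, $(1,1)$. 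By construction $g_n(F \cap [0, c-\delta]) \subseteq [0, 1/n]$ and $g_n(F \cap [c+\delta, 1]) \subseteq [1-1/n, 1]$, so $g_n(F) \subseteq [0, 1/n] \cup [1-1/n, 1]$.

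The next step is to read off the Vietoris limit of $g_n(F)$. Using the characterization of convergence in $\Exp([0,1])$ recorded just before Proposition \ref{prop1}, the limit lies in $\{0,1\}$; moreover $0$ belongs to the limit exactly when $F \cap [0, c] \neq \emptyset$, and $1$ belongs to the limit exactly when $F \cap [c, 1] \neq \emptyset$. Because $F \cap (0,1)$ is non-empty and $c \notin F$, at least one of these two conditions holds, so the Vietoris limit of the $g_n(F)$ is a non-empty subset of $\{0,1\}$, i.e., one of $\{0\}$, $\{1\}$, $\{0,1\}$. This limit lies in the orbit closure of $F$, completing the reduction.

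There is really no serious obstacle in this one-dimensional case: $\Homeo_+[0,1]$ has enough flexibility to squeeze $F$ toward both endpoints as soon as $F$ has a single interior gap point $c$, and the case analysis at $F = [0,1]$ or $F \subseteq \{0,1\}$ is immediate. The substantial difficulty is deferred to Proposition \ref{prop1}, where the two-dimensional analogue of this squeezing argument is precisely the place where the dichotomy of Theorem \ref{thm1} must be invoked.
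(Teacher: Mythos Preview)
Your proof is correct and follows essentially the same squeezing idea as the paper: find an open gap in $F$ and use a homeomorphism to stretch that gap to fill almost all of $[0,1]$, pushing $F$ into small neighborhoods of the endpoints. The only cosmetic difference is that the paper splits into three sub-cases (points on both sides of the gap, points only on the left, points only on the right) and treats each separately, whereas you handle all three at once by reading off which of $0$, $1$ survive in the Vietoris limit; your packaging is slightly cleaner but the content is identical.
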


\begin{proof}
Let $X$ be a minimal subflow of $\Exp([0,1])$ and let $A \in X$. There are a few cases to consider. Suppose first that there is an interval $(a,b)$ contained in $A^c$ and that $A$ contains points less than $a$ and greater than $b$. Then, for any $\epsilon >0$, there is some $g \in \Homeo_+[0,1]$ so that $g(a)=\frac{\epsilon}{2}$ and $g(b) =1-\frac{\epsilon}{2}$. Notice that for such a $g$, $g(A) \subseteq \left(\{0,1\}\right)_\epsilon$ and $\{0,1\} \subset (g(A))_\epsilon$. Since $X$ is $\Homeo_+[0,1]$-invariant and closed in $\Exp([0,1])$, it follows that $\{0,1\} \in X$. But $\{0,1\}$ is clearly a fixed point of the action of $\Homeo_+[0,1]$ on $\Exp([0,1])$ and $X$ is minimal; so $X =\{\{0,1\}\}$. Similar arguments in the cases that $A \subset [b,1]$ (resp. $A\subset [0,a]$) give that $X=\{\{1\}\}$ (resp. $X=\{\{0\}\}$). 

If there is no such interval $(a,b) \subset A^c$; then $A$ is dense in $[0,1]$ and it follows that $A=[0,1]$ and so $X=\{\{[0,1]\}\}$ since $[0,1]$ is a fixed point of the action.
\end{proof}

Now we can prove the proposition:

\begin{proof}[Proof of Proposition \ref{prop1}]
Let $X$ be a minimal subflow of $\Exp(\Delta^2)$. Let $A \in X$. Lemma \ref{lem2} and Theorem \ref{thm1} imply that at least one of the following two things happens:

\begin{enumerate}
    \item $A$ is pseudo-dense.
    \item $A$ is thin.
\end{enumerate}
Since $X$ is $\Homeo_+[0,1]$-invariant, $g(A) \in X$ for each $g \in \Homeo_+[0,1]$. In the case that (1) occurs, the fact that $X$ is a closed subset of $\Exp(\Delta^2)$ implies that $\Delta^2 \in X$. It is clear that $\Delta^2$ is a fixed point of the action of $\Homeo_+[0,1]$ and since $X$ is minimal it must be that $X=\{\Delta^2\}$.

In the case that (2) occurs, compactness of $X$ implies that there exists some $B \in X$ so that $B \subseteq \partial \Delta^2$. The boundary of $\Delta^2$ is composed of three line segments. Consider for example 
\[B_0:=B \cap \{(x,y) \ : \ x=0 \textrm{ and }0\leq y\leq 1 \}\]

By Lemma \ref{lem4}, there exists some $B' \in X$ so that $B' \subseteq \partial \Delta^2$ (notice that each face of $\Delta^2$ is preserved set-wise by the action) and so that 
\[B' \cap \{(x,y) \ : \ x=0 \textrm{ and }0\leq y\leq 1 \}\]
is some union of faces of $\{(x,y) \ : \ x=0 \textrm{ and }0\leq y\leq 1 \}$.

Repeating this argument two more times on the other two one-dimensional faces of $\Delta^2$, applying in turn the fact that the action of $\Homeo_+[0,1]$ preserves each face set-wise, we produce $C \in X$ so that $C \subseteq \partial \Delta^2$ is a union of faces of $\Delta^2$. Since $C$ is a fixed point of the action, $X=\{C\}$.
\end{proof}

\section{A 3-dimensional theorem}\label{sec43splx}

We develop a natural analog of the notion of rank in two dimensions to higher dimensions, conjecture the natural generalization of Theorem \ref{thm1} to higher dimensions, and then prove a theorem for the 3-simplex that is partial progress towards resolving the conjecture. We take the usual representation of the $n$-simplex:
\[\Delta^n=\{(x_1,x_2,\ldots, x_n) \ : \ 0\leq x_1\leq x_2\leq \cdots \leq x_n \leq 1\} \subset \R^n\]

\begin{definition}\label{def2}
A set $A \subset \Delta^n$ has \emph{rank $\geq m$} if there exists 
\[
\begin{split}
0 & \leq x_1^0 <x_1^1 \leq x_2^1\leq x_3^1 \leq \cdots \leq x_n^1 < x_1^2 \leq x_2^2 \leq \cdots \\
& \leq x_n^2< \cdots < x_1^m \leq x_2^m \leq \cdots \leq x_n^m < x_n^{m+1} \leq 1
\end{split}
\]
such that:
\[\textrm{int} \left( \bigcup_{0\leq i_1<i_2<\cdots <i_n \leq m} [x_1^{i_1},x_1^{i_1+1}] \times [x_2^{i_2},x_2^{i_2+1}] \times \cdots \times [x_n^{i_n},x_n^{i_n+1}] \right) \subseteq A. \]
\end{definition}

Just as in the two dimensional case, we define patterns. For $N \in \N$ where $n \vert N$, an \emph{$n$-pattern of size $N$} is a subset $P$ of $([N])^n$ such that $\bigcup_{a\in P}a=[N]$ and $a \cap b =\emptyset$ for each $a,b \in P$ with $a \neq b$. Let
\[O_n:(\N)^n \to  \underbrace{\N \times \cdots \times \N}_{n\textrm{-times}}\]
be given by $O_n(\{a_1,\ldots,a_n\})=(a_{\tau(1)},a_{\tau(2)},\ldots, a_{\tau(n)})$ where $\tau$ is the unique permutation of $\{1,2,\ldots, n\}$ such that $a_{\tau(1)}<a_{\tau(2)}< \ldots < a_{\tau(n)}$. As in dimension 2 we use $\proj_i:\underbrace{\N \times \cdots \times \N}_{n\textrm{-times}} \to \N$ to be the projection onto the $i$th coordinate and also use $\proj_i:\Delta^n \to [0,1]$ to be projection onto the $i$th coordinate, $x_i$. When $P$ is an $n$-pattern of size $N$ and $F \subseteq \Delta^n$, a \emph{copy of $P$ in $F$} is a function 
\[\sigma: O_n(P) \to F\]
so that for any $a,b \in O_n(P)$ and any $p,q \in \{\proj_1,\proj_2,\ldots,\proj_n\}$
\[p(a)<q(b) \implies p(\sigma(a))<q(\sigma(b)).\]
Notice that a copy $\sigma$ of $P$ induces an order preserving function $f_\sigma:[N] \to [0,1]$
\[f_\sigma(m)=\proj_i(\sigma(a))  \]
where $i \leq n$ and $a$ is such that $m=\proj_i(a)$.

Given $0\leq a<b\leq 1$, we let 
\[D_{a,b}^n=\{x \in \Delta^n \ : \ \proj_i(x) \in (a,b) \textrm{ for all }i\leq n\}\]
and call each such set $D_{a,b}^n$ an \emph{essential $n$-simplex}. An $n$-pattern $P$ is \emph{essential for $F$} if there is a copy of $P$ in $F \cap D^n_{a,b}$ for each $a<b$.

\begin{conj}\label{conj1}
Let $F \subseteq \Delta^n$. Then at least one of the following holds:
\begin{enumerate}
    \item $\rank(F^c)=\infty$
    \item any finite $n$-pattern is essential for $F$
\end{enumerate}
\end{conj}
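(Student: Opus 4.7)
The strategy is to mirror the two-dimensional argument: prove a higher-dimensional analog of Lemma \ref{lem1} by iterated pattern-copying, then induct on pattern size. I would assume $\rank(F^c) \leq M < \infty$, so that every subset of $\Delta^n$ of rank at least $M + 1$ meets $F$. The base case ($N = n$, the only pattern being $\{[n]\}$) holds because each essential $n$-simplex $D^n_{a,b}$ has infinite rank directly from Definition \ref{def2} and so must meet $F$. For the inductive step, given an $n$-pattern $P'$ of size $N + n$, I would single out an $n$-subset $\alpha = \{k_1 < \cdots < k_n\} \in P'$ and obtain an essential $n$-pattern $P$ of size $N$ by removing $\alpha$ and relabeling via the order-preserving bijection $[N + n] \setminus \alpha \to [N]$.

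To show $P'$ is essential at fixed $a < b$, suppose no copy lies in $F \cap D^n_{a, b}$ and iterate: set $(a_0, b_0) = (a, b)$, and at step $i$ pick a copy $\sigma_i$ of $P$ in $F \cap D^n_{a_{i-1}, b_{i-1}}$ and let $B_i \subseteq \Delta^n$ be the set of points $(y_1, \ldots, y_n)$ that extend $\sigma_i$ to a copy of $P'$ --- i.e., each $y_j$ lies in the slot bounded by the values of $f_{\sigma_i}$ at the nearest old positions flanking $k_j$ (using $a$ or $b$ at the ends). The failure assumption forces $B_i \cap F = \emptyset$, hence $B_i \subseteq F^c$. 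Shrink to a new essential simplex $D^n_{a_i, b_i}$ using one of the slots of $\sigma_i$, iterate $M + 2$ times, and argue that $\bigcup_i B_i$ has rank at least $M + 1$, the desired contradiction. The easy preliminary case, as in 2D, is when $\alpha = \{N, N+1, \ldots, N+n-1\}$ lies entirely ``at the top'': there one just picks any point of $F$ inside $D^n_{f_{\sigma_1}(N-1), b}$, which has infinite rank.

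The main obstacle is controlling the shape of $B_i$ and verifying the rank bound on $\bigcup_i B_i$. When $k_1, \ldots, k_n$ are pairwise separated by old positions in $[N + n]$, the $y_j$-slots are disjoint intervals, $B_i$ is a genuine $n$-box, and the $B_i$'s assemble into a staircase matching Definition \ref{def2} essentially word for word with the 2D proof. But when some $k_j$ and $k_{j+1}$ are adjacent in $[N + n]$, their slots overlap and $B_i$ is cut out of a product by the constraint $y_j \leq y_{j+1}$; it is no longer a box. One must then either replace $B_i$ by a maximal sub-box (at the cost of some dimensions of rank) or orchestrate the shrinking protocol across steps so the union of the awkwardly-shaped $B_i$'s still matches the rigid configuration $x_1^{i_1} \leq x_2^{i_2} \leq \cdots \leq x_n^{i_n}$ demanded by Definition \ref{def2}. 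I expect a case analysis along these lines to succeed for $n = 3$ and to correspond to the author's partial Theorem \ref{thm3}, while a uniform argument in all dimensions likely requires either a refined rank invariant or a genuinely new geometric construction --- precisely the kind of new idea the author's remark about a ``new approach'' already foreshadows.
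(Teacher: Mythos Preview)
The statement is labeled a \emph{conjecture} in the paper and is not proved there; the paper establishes only the partial result Theorem~\ref{thm3} (the single pattern $\{\{0,2,4\},\{1,3,5\}\}$ in dimension $3$), so there is no full proof to compare against. Your outline correctly isolates one genuine obstruction (adjacent positions $k_j,k_{j+1}$ produce a simplex-shaped slot rather than a box), but it underestimates the difficulty of the step you describe as routine.

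Your claim that in the separated case the $B_i$'s ``assemble into a staircase matching Definition~\ref{def2} essentially word for word with the 2D proof'' is where the argument actually breaks. In two dimensions the added pair is always $\{k,n+1\}$: the second slot opens all the way to the global endpoint $b$, and after iterated shrinking the boxes $(a_{M+2},b_i)\times(c_i,b)$ are \emph{nested} in both coordinates, which is precisely the shape Definition~\ref{def1} demands. In three dimensions, even with $k_1,k_2,k_3$ fully separated and $k_3$ at the top, the \emph{middle} slot $(b_i,b_i')$ is bounded on both sides by values of $f_{\sigma_i}$. Shrinking into the first slot makes the middle slots at successive stages pairwise \emph{disjoint}, not nested; the union $\bigcup_i(a_i,a_i')\times(b_i,b_i')\times(c_i,b)$ then cannot contain the off-diagonal boxes $[x_1^{i_1},x_1^{i_1+1}]\times[x_2^{i_2},x_2^{i_2+1}]\times[x_3^{i_3},x_3^{i_3+1}]$ that Definition~\ref{def2} requires for \emph{every} triple $i_1<i_2<i_3$. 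Concretely, two such boxes sharing the same $x$-range but with $y$-levels $i_2=1$ and $i_2=2$ would be forced into distinct $B_j$'s with disjoint $y$-slots, and the ordering constraints on the $x_2^l$ then collide with the gap between those slots. The middle coordinate fails to telescope before any adjacency issue even enters.

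The paper's partial result handles exactly this failure by replacing your nested iteration with the \emph{chain} construction $\mathrm{Ch}^m_{j,k}(P)$: successive copies of $P$ are not placed inside one another but glued with a prescribed overlap so that the middle slots interlock in the configuration Definition~\ref{def2} requires (this is Claim~\ref{claim2} inside Lemma~\ref{lem5}). Even so, one must separately prove that arbitrarily long chains are essential, which needs a further induction (Claim~\ref{claim3}) built on Lemma~\ref{lem6} and Corollary~\ref{cor1}. All of this machinery yields only the single pattern of Theorem~\ref{thm3}, not the full $n=3$ case. So the distance between your outline and even the three-dimensional partial result is more than a case split: the nested-shrinking scheme must be replaced, not patched, and the paper's own remarks about needing a ``new approach'' refer to difficulties already present in the separated case.
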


An argument very similar to that presented in Section 2 gives that Conjecture 1 implies that for any $n\in \N$, every minimal $\Homeo_+[0,1]$-subflow of $\Exp(\Delta^n)$ is a singleton that consists of a union of faces of $\Delta^n$. Therefore a proof of Conjecture \ref{conj1} (which is independent of Pestov's theorem) would produce a new proof of the extreme amenability of $\Homeo_+[0,1]$.

Here is the theorem in dimension 3 that represents partial progress towards Conjecture 1. Theorem \ref{thm3} considers the first ``non-trivial'' pattern in 3-dimensions.

\begin{thm}\label{thm3}
Let $F \subseteq \Delta^3$. Then at least one of the following holds:
\begin{enumerate}
\item $\rank(F^c)=\infty$
\item the pattern $\{0,2,4\},\{1,3,5\}$ is essential for $F$
\end{enumerate}
\end{thm}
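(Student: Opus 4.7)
The plan is to mirror the strategy used to prove Theorem \ref{thm1}. Assume $\rank(F^c) = M < \infty$; this implies that $F$ meets every set of rank $>M$, and in particular $F \cap D^3_{a,b} \neq \emptyset$ for every $a<b$. I argue by contradiction: suppose there are $a<b$ with no copy of $P := \{\{0,2,4\},\{1,3,5\}\}$ in $F \cap D^3_{a,b}$. A copy of $P$ is a pair $u,v \in F$ with $u_1 < v_1 < u_2 < v_2 < u_3 < v_3$, so for every $u \in F \cap D^3_{a,b}$ the interleaving box
\[
B_u := (u_1, u_2) \times (u_2, u_3) \times (u_3, b)
\]
must be disjoint from $F$. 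The goal is then to build a subset of $F^c$ of rank $\geq M+1$ from finitely many such boxes $B_u$, contradicting the rank hypothesis.

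Following the template of Lemma \ref{lem1}, one would iteratively pick points $u^{(1)}, u^{(2)}, \ldots \in F$, each inside a sub-simplex chosen on the basis of previously-picked points (using that $F$ meets every essential simplex), and assemble the resulting boxes $B_{u^{(i)}}$ into a rank-$(M+1)$ configuration. The key design question is how to nest the sub-simplices: in two dimensions a single linear iteration produces a ``ladder'' of rectangles that directly matches the $2$D rank definition, but in three dimensions a rank-$m$ configuration (Definition \ref{def2}) contains $\binom{m+1}{3}$ boxes indexed by increasing triples of layer indices. Since a single $B_u$ has a restricted ``diagonal'' shape (its first and second coordinate ranges meet at $u_2$, and its second and third meet at $u_3$), covering a generic rank box $[x_1^{i_1}, x_1^{i_1+1}] \times [x_2^{i_2}, x_2^{i_2+1}] \times [x_3^{i_3}, x_3^{i_3+1}]$ by a single $B_u$ forces $u_1 \leq x_1^{i_1}$, $x_1^{i_1+1} \leq u_2 \leq x_2^{i_2}$, and $x_2^{i_2+1} \leq u_3 \leq x_3^{i_3}$. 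This suggests indexing the iteration by triples $(i_1,i_2,i_3)$ with $0 \leq i_1 < i_2 < i_3 \leq M+1$ and producing one point $u^{(i_1,i_2,i_3)} \in F$ per rank box.

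The main obstacle I anticipate is that the rectangular region in which each $u^{(i_1, i_2, i_3)}$ must lie has its three coordinate slots in pairwise-disjoint subintervals of $[0,1]$, so the region does not contain any essential simplex; the bare fact that $F$ meets every essential simplex therefore does not give the required points directly. The proof must instead select the points in a carefully-scheduled order and argue at each step that the next target region meets $F$, because the alternative would already produce a rank-$(M+1)$ configuration in $F^c$ from the forbidden boxes already constructed together with one that the failure itself would supply. Designing this schedule and carrying out the bookkeeping for the specific pattern $P$ is the bulk of the technical work, and the place where the $3$-dimensional argument genuinely differs from the $2$-dimensional one, consistent with the remarks in the introduction that new ideas are likely needed to handle higher dimensions.
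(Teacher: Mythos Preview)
Your outline correctly identifies the central difficulty, but it stops precisely where the real work begins and therefore is not yet a proof. You propose to find points $u^{(i_1,i_2,i_3)}\in F$ lying in product regions with pairwise disjoint coordinate intervals, and you suggest that at each step either the target region meets $F$ or its failure, combined with the boxes $B_u$ already collected, yields a rank-$(M+1)$ configuration in $F^c$. Neither alternative is justified. A single product region with disjoint coordinate intervals has rank at most $1$, so its emptiness contributes essentially nothing on its own; to make the bootstrapping work you would have to arrange, in advance, that the regions you query are themselves organized into a rank-$(M+1)$ array, so that if any one of them is empty you already win. But those query regions are determined by the $u$'s you have already found, which in turn were chosen from earlier query regions---the scheme is circular, and you have not shown how to break the circularity. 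This is exactly the point at which the $3$-dimensional problem diverges from the $2$-dimensional one, and your proposal offers no mechanism to resolve it.

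The paper's proof takes a rather different route. Instead of trying to place one point of $F$ per box of a rank configuration, it first builds auxiliary \emph{essential} patterns: it shows that arbitrarily long ``chains'' $\mathrm{Ch}^m_{1,2}(P)$ of the singleton pattern $P=\{\{0,1,2\}\}$ are essential for $F$ (this is Claim~\ref{claim3}, proved by an induction on $m$ that itself uses a Lemma~\ref{lem6}-type rank argument at each step). Only after a long chain is known to sit inside every $F\cap D^3_{a,b}$ does one assemble, from the coordinates of that chain, a high-rank set $S$ (Lemma~\ref{lem5} and Claim~\ref{claim2}); since $S$ must meet $F$, the resulting point together with one link of the chain gives the desired copy of $\{\{0,2,4\},\{1,3,5\}\}$. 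The chain machinery is precisely what supplies, in an organized way, the many points of $F$ whose existence your direct approach could not secure.
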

From now on, we will only be working with $\Delta^3$ and with $3$-patterns; when we say \emph{pattern} we always mean a $3$-pattern.

First, a bit of notation. For $j_1,j_2,l_1,l_2 \in \N$  with $j_1 < j_2$ define:
\[\Phi_{(j_1,l_1),(j_2,l_2)}:\N \to \N\]
by
\[
\Phi_{(j_1,l_1),(j_2,l_2)}(x) =
\begin{cases}
x & \textrm{ if } x < j_1\\
x+l_1 &\textrm{ if }j_1 \leq x < j_2\\
x+l_1+l_2 & \textrm{ if }x\geq j_2\\
\end{cases}
\]

We also define
\[\Phi_{(j_1,l_1)}:\N \to \N\]
by
\[
\Phi_{(j_1,l_1)}(x) =
\begin{cases}
x & \textrm{ if } x < j_1\\
x+l_1 &\textrm{ if }x \geq j_1\\
\end{cases}
\]
We use also the convention that
\[\Phi_{(j_1,l_1),(j_1,l_2)}=\Phi_{(j_1,l_1+l_2)}.\]
Every map defined above is injective and order-preserving on its domain. Abusing notation, we will use $\Phi_{(j_1,l_i),(j_2,l_2)}$ and $\Phi_{(j_1,l_1)}$ to denote the obvious corresponding map on $2^{\N}$, the collection of subsets of $\N$, and the obvious coordinate-wise map on $\N\times \N \times \N$ as well.

For $k \in \N$ we let
\[T_k: \N \to \N\]
be the translation given by
\[T_k(i)=i+k\]
and we use $T_k$ also to denote the obvious extensions to $2^{\N}$ and $\N \times \N \times \N$.

Now suppose that $P$ is a pattern of size $n$, $j_1\leq j_2 \leq n$, $Q$ is a pattern of size $m$, and $i\leq m$. Then when $j_1<j_2$, by $P_{j_1}^{j_2} \oplus Q_i$ we mean the pattern:
\[\Phi_{(j_1,i),(j_2,m-i)}(P) \cup \Phi_{(0,j_1),(i,j_2-j_1)}(Q)\]

A consequence of pattern $Q$ of size $n$ being essential for a set $F$ is that for any pattern $P$ which is essential for $F$ and any $l_1<l_2\leq m$ where $m$ is the size of $P$, the pattern
\[P_{l_1}^{l_2} \oplus Q_{n}\]
is essential for $F$. We will use this easy observation implicitly many times in the proofs below.

The key construction is that of a \emph{chain}. Fix a pattern $P$ of size $n$ and $j<k\leq n$. We inductively define a \emph{$l$-chain of $P$ at $j,k$}, which we denote by $\textrm{Ch}^l_{j,k}(P)$ as follows. A 1-chain of $P$ at $j,k$ is just $P$
\[\textrm{Ch}^1_{j,k}=P\]
Having defined $\textrm{Ch}^l_{j,k}(P)$, define 
\[\textrm{Ch}^{l+1}_{j,k}(P)=\left(\textrm{Ch}^l_{j,k}(P)\right)_{lj}^{(l-1)j+k} \oplus P_k\]
It is helpful to be able to keep track of the individual copies of $P$ that form a chain of $P$. Suppose that $\sigma: O(\textrm{Ch}_{j,k}^l(P)) \to \Delta^2$ is a copy of the $l$-chain of $P$ at $j,k$ and that $P$ is of size $n$. Then for $i=1,\ldots, l$ we denote by $\sigma^{(i)}$ the copy of $P$ added in stage $i$ of the inductive construction of the chain. To be precise, define $\sigma^{(i)}: O(P) \to \Delta^2$ by:
\[\sigma^{(1)}=\sigma \circ \Phi_{(j,(l-2)(n)+k),(k,n-k)}\]
and for $i=2,\ldots, l$:
\begin{equation}\label{eqn4}
   \sigma^{(i)}= \sigma \circ (T_j)^{i-1} \circ \Phi_{(j,(l-i-1)(n)+k), (k,n-j)} 
\end{equation}

One downside to the notation is that it tends to obscure the geometric intuition of patterns ``glued'' together in specific formations that underlies nearly all of the arguments to come. The reader is encouraged to draw diagrams while reading the proofs; this will make the arguments much easier to follow. To demonstrate, suppose that $P$ and $Q$ are arbitrary patterns of size $m$ and $n$ respectively, and $j_1<j_2\leq n$ and $k\leq m$, Figure \ref{fig1} is a diagram of $P_{j_1}^{j_2} \oplus Q_k$. 

\begin{figure}[ht]
\centering
\includegraphics[height=2.8cm]{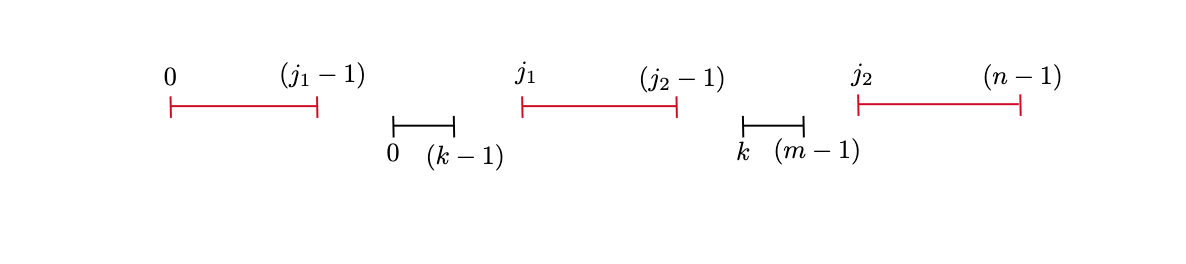}
\caption{A diagram of $P_{j_1}^{j_2} \oplus Q_k$; the pattern $P$ is in red and $Q$ is in black.}
\label{fig1}
\end{figure}

\begin{lem}\label{lem5}
Let $P$ be a pattern of size $n$ with $j< k< n$. Assume $F\subseteq \Delta^2$ is so that $\rank(F^c)<\infty$. If $\textrm{Ch}^m_{j,k}(P)$ is essential for $F$ for every $m \in \N$, then
\[P' = \Phi_{(j,1),(k,1)}(P) \cup \{(j,k+1,n+2)\}\]
is essential for $F$.
\end{lem}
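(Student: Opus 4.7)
The plan is to argue by contradiction in the spirit of Lemma \ref{lem1}, substituting chain essentiality for essentiality of $P$. Assume $P'$ is not essential, so there exist $a_0 < b_0$ with no copy of $P'$ in $F \cap D^3_{a_0,b_0}$; set $M=\rank(F^c)<\infty$. The contradiction will come from producing a subset of $F^c$ of rank at least $M+1$.

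First, fix a sufficiently large $m$ (depending on $M$) and use essentiality of $\textrm{Ch}^m_{j,k}(P)$ to obtain a copy $\sigma$ of the chain inside $F\cap D^3_{a_0,b_0}$, which by equation \eqref{eqn4} splits into copies $\sigma^{(1)},\ldots,\sigma^{(m)}$ of $P$. For each $s$, any point $(x,y,z)\in F$ satisfying $f_{\sigma^{(s)}}(j-1)<x<f_{\sigma^{(s)}}(j)$, $f_{\sigma^{(s)}}(k-1)<y<f_{\sigma^{(s)}}(k)$, and $f_{\sigma^{(s)}}(n-1)<z<b_0$ would adjoin to $\sigma^{(s)}$ to yield a forbidden copy of $P'$ in $F\cap D^3_{a_0,b_0}$. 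Hence each of the open boxes
\[B_s=\bigl(f_{\sigma^{(s)}}(j-1),f_{\sigma^{(s)}}(j)\bigr)\times\bigl(f_{\sigma^{(s)}}(k-1),f_{\sigma^{(s)}}(k)\bigr)\times\bigl(f_{\sigma^{(s)}}(n-1),b_0\bigr)\]
is contained in $F^c$ for $s=1,\ldots,m$.

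Next, I would extract a rank-$(M+1)$ configuration from the union $\bigcup_s B_s$. Unravelling the recursion defining $\textrm{Ch}^m_{j,k}(P)$ and the formulas of equation \eqref{eqn4} shows that the boxes $B_s$ are arranged as a staircase: as $s$ increases, the $x$-intervals $(f_{\sigma^{(s)}}(j-1),f_{\sigma^{(s)}}(j))$ nest strictly inward, the $z$-intervals $(f_{\sigma^{(s)}}(n-1),b_0)$ nest strictly outward with common right endpoint $b_0$, and the $y$-intervals translate leftward in an overlapping cascade. With $m$ chosen large enough in terms of $M$, the aim is to pick split values $x_l^i$ realising all $\binom{M+2}{3}$ slots of a rank-$(M+1)$ structure inside $\bigcup_s B_s$, giving $\rank(F^c)\geq M+1$, a contradiction.

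The main obstacle is precisely this last fitting step. In two dimensions the obstruction rectangles of Lemma \ref{lem1} match the slots of the rank structure one-to-one, but in three dimensions the rank-$(M+1)$ structure demands $\binom{M+2}{3}$ boxes arranged in a full three-dimensional staircase, which is more rigid than the essentially planar staircase produced by a single chain. I expect that one application of chain essentiality will not suffice, and that one must iterate --- shrinking $D^3_{a_0,b_0}$ through a carefully chosen nested sequence of essential simplices and invoking chain essentiality at each stage, in analogy with the shrinking-triangle induction of Lemma \ref{lem1} --- in order to build up the three-dimensional rank configuration slot by slot.
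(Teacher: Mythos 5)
Your high-level plan matches the paper's: take a copy $\sigma$ of $\ch^m_{j,k}(P)$ in $F\cap D^3_{a,b}$, split it via Equation~\ref{eqn4} into copies $\sigma^{(1)},\ldots,\sigma^{(m)}$ of $P$, form the $m$ open boxes $B_s$ each of which would complete $\sigma^{(s)}$ to a copy of $P'$ if met by $F$, and conclude from the rank of $\bigcup_s B_s$. (The paper phrases this directly rather than by contradiction, but the two framings are equivalent.) The gap is in your final paragraph: you predict that a single chain copy cannot supply the needed rank configuration and propose iterating over a nested sequence of essential simplices. That prediction is wrong, and it is precisely the content of the paper's Claim~\ref{claim2} that the union $S=\bigcup_{s=1}^m B_s$ coming from \emph{one} chain copy already has rank at least $\lfloor m/2\rfloor$, which is all that is needed once $m$ is chosen so that every set of that rank meets $F$.

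Your worry that a rank-$(M+1)$ structure ``demands $\binom{M+2}{3}$ boxes arranged in a full three-dimensional staircase'' reads more into Definition~\ref{def2} than is there: the definition only asks that the \emph{union} of the slot boxes $[x_1^{i_1},x_1^{i_1+1}]\times[x_2^{i_2},x_2^{i_2+1}]\times[x_3^{i_3},x_3^{i_3+1}]$, over increasing triples $i_1<i_2<i_3$, have interior inside $A$; it does not require $\binom{M+2}{3}$ disjoint obstruction boxes, and that union can be, and in the paper is, covered by just $m$ boxes. The paper takes the coordinates $x_i,y_i,z_i$ from every \emph{other} chain link $\sigma^{(m)},\sigma^{(m-2)},\ldots$ (whence the factor $\lfloor m/2\rfloor$) and then verifies that every slot box with indices $p<q<r$ is already contained in $C_{m-2q-2}\cup C_{m-2q-1}$: the $y$-index $q$ singles out which pair of chain boxes to use, and because, as you correctly observed, the $x$-intervals nest inward with $s$ while the $z$-intervals all share right endpoint $b$ and nest outward, those two boxes are wide enough in $x$ and $z$ to absorb every $x$-slot with $p<q$ and every $z$-slot with $r>q$. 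The chain is engineered exactly so that this interleaving works out; once that is verified, no further iteration over shrinking essential simplices is needed.
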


\begin{proof}
Let $m$ be such that every set $S$ with $\rank(S) \geq \lfloor\frac{m}{2}\rfloor$ intersects $F$. Let $a<b$ and let $\sigma:O\left(\textrm{Ch}^m_{j,k}(P)\right) \to F \cap D_{a,b}$ be a copy of $\textrm{Ch}^m_{j,k}(P)$. Let $f_\sigma$ be the order-preserving function $[mn] \to [0,1]$ induced by $\sigma$.

Let $f_{\sigma^{(i)}}:[n] \to [0,1]$ be the order-preserving functions induced by $\sigma^{(i)}$, the $i$th copy of $P$ in the chain as defined above.

Now consider the set:
\[S=\bigcup_{i=1}^m (f_{\sigma^{(i)}}(j-1),f_{\sigma^{(i)}}(j)) \times (f_{\sigma^{(i)}}(k-1),f_{\sigma^{(i)}}(k)) \times (f_{\sigma^{(i)}}(n-1),b)\]

\begin{claim}\label{claim2}
$\rank(S) \geq \lfloor \frac{m}{2} \rfloor$.
\end{claim}

Once we have Claim \ref{claim2}, we are done; our assumption on $F$ implies that $F \cap S \neq \emptyset$ and in particular, there exists 
\[(x,y,z) \in F \cap (f_{\sigma^{(i)}}(j-1),f_{\sigma^{(i)}}(j)) \times (f_{\sigma^{(i)}}(k-1),f_{\sigma^{(i)}}(k)) \times (f_{\sigma^{(i)}}(n-1),b)\]
Then, we can extend copy $\sigma^{(i)}$ of $P$ to a copy $(\sigma^{(i)})'$ of $P'$ in $F \cap D_{a,b}$ by:
\[(\sigma^{(i)})'(u)=
\begin{cases}
\sigma^{(i)}(v) & \textrm{ if }u=\Phi_{(j,1),(k,1)}(v)\\
(x,y,z) & \textrm{ otherwise}\\
\end{cases}\]
So it remains to show the Claim.
\end{proof}

\begin{proof}[Proof of Claim \ref{claim2}]
Let $x_0=f_{\sigma^{(m)}}(j-1)$, $x_1=f_{\sigma^{(m)}}(j)$, $y_1=f_{\sigma^{(m)}}(k-1)$, and $z_1=f_{\sigma^{(m)}}(n-1)$. For $2\leq i\leq \lfloor\frac{m}{2}\rfloor$, let $x_i=f_{\sigma^{(m-2(i-1))}}(j)$, $y_i=f_{\sigma^{(m-2(i-1))}}(k-1)$, and $z_i=f_{\sigma^{(m-2(i-1))}}(n-1)$. 

Since each $f_{\sigma^{(i)}}$ is order-preserving, it is clear that $x_i \leq y_i \leq z_i$ for each $i$. We check that for each $i$, $z_i<x_{i+1}$. First we have:
\begin{align*}
    z_i &= f_{\sigma^{(m-2i+2)}}(n-1)\\
    &=f_\sigma \left((T_j)^{m-2i+1} \circ \Phi_{(j,(m-(m-2i+2)-1)(n)+k),(k,n-j)}(n-1)\right)\\
    &= f_{\sigma}\left((T_j)^{m-2i+1} (n-1+(2i-3)(n)+k+n-j)\right)\\
    &=f_{\sigma}\left((2i-1)(n)+k-j-1+j(m-2i+1)\right)
\end{align*}
The second equality above is using Equation \ref{eqn4}. A similar computation shows that
\[x_{i+1}=f_\sigma ((2i-1)(n)+k+j+j(m-2i-1))\]
Since 
\[(2i-1)(n)+k+j+j(m-2i-1)-((2i-1)(n)+k-j-1+j(m-2i+1))=1>0\]
and $f_\sigma$ is order-preserving, we have that $x_{i+1}<z_i$.
Clearly also $a<x_0<x_1$ and $z_{\lfloor \frac{m}{2}\rfloor}<b$. 

We now must check that
\begin{equation}\label{eqn1}
\textrm{int}\left( \bigcup_{0\leq p<q<r\leq m} [x_p,x_{p+1}] \times [y_q,y_{q+1}] \times [z_r,z_{r+1}] \right) \subseteq S
\end{equation}

For $i=1,2,\ldots,m$, let 
\[C_i= (f_{\sigma^{(i)}}(j-1),f_{\sigma^{(i)}}(j)) \times (f_{\sigma^{(i)}}(k-1),f_{\sigma^{(i)}}(k)) \times (f_{\sigma^{(i)}}(n-1),b). \]
\noindent To check \ref{eqn1}, it suffices to check that 
\[[x_p,x_{p+1}] \times  [y_q,y_{q+1}] \times [z_r,z_{r+1}] \subseteq C_{m-2q-2} \cup C_{m-2q-1}\]
when $0<p$, $p+1<q$, $q+1<r$, and $r <m$. We leave this computation to the reader to check using Equation \ref{eqn4} repeatedly.

%\begin{split}
%[x_p,x_{p+1}] \times  [y_q,y_{q+1}] \times [z_r,z_{r+1}] & \subseteq\\
%(f_{\sigma^{(m-2q-2)}}(j-1),f_{\sigma^{(m-2q-2)}}(j)) \times (f_{\sigma^{(m-2q-2)}}(k-1),f_{\sigma^{(m-2q-2)}}(k)) \times (f_{\sigma^{(m-2q-2)}}(n-1),b) \\
%\cup (f_{\sigma^{(m-2q-1)}}(j-1),f_{\sigma^{(m-2q-1)}}(j)) \times (f_{\sigma^{(m-2q-1)}}(k-1),f_{\sigma^{(m-2q-1)}}(k)) \times (f_{\sigma^{(m-2q-1)}}(n-1),b) \subset S
%\end{split}
Analogous containments hold:   \[[x_p,x_{p+1}) \times (y_q,y_{q+1}] \times [z_r,z_{r+1}] \subseteq C_{m-2q-2} \cup C_{m-2q-1}\]
when $0<p$, $p+1=q$, $q+1<r$, and $r<m$ and  
\[(x_p,x_{p+1}] \times [y_q,y_{q+1}) \times (z_r,z_{r+1}] \subseteq C_{m-2q-2} \cup C_{m-2q-1}\]
when $p=0$, $p+1<q$, $q+1=r$, $r<m$ and so on. It follows that $\rank(S) \geq \lfloor \frac{m}{2}\rfloor$.
\end{proof}

The lemma below captures a specific situation in which one has a high rank set:

\begin{lem}\label{lem6}
Let $a_1,b_1,c_1,a_2,b_2,c_2, \ldots,a_m,b_m,c_m$ be such that for each $i$, $a_1<a_{i+1}<b_{i+1}<c_{i+1}<b_i$ and $b_1<d$ and define
\[S=\bigcup_{i=1}^m (a_i,b_i) \times (a_i,b_i) \times (c_i,d)\]
Then, $\rank(S) \geq m$.
\end{lem}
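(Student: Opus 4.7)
The plan is to give an explicit witness for $\rank(S) \geq m$ by placing the points $x_p^j$ in a ``staircase'' mirroring the nested structure of the sets $S_i=(a_i,b_i)^2\times (c_i,d)$. Concretely, I would arrange matters so that the box
\[B_{i_1,i_2,i_3}:=[x_1^{i_1},x_1^{i_1+1}]\times [x_2^{i_2},x_2^{i_2+1}]\times [x_3^{i_3},x_3^{i_3+1}]\]
associated to any triple $0\leq i_1<i_2<i_3\leq m$ has its interior lying inside $S_{m-i_3+1}$.

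First I would iterate the hypothesis $a_1<a_{i+1}<b_{i+1}<c_{i+1}<b_i$ to obtain the total ordering
\[a_1<a_2<\cdots<a_m<b_m<c_m<b_{m-1}<c_{m-1}<\cdots<b_2<c_2<b_1<d,\]
which exhibits the footprints $(a_i,b_i)^2$ as strictly nested in $i$ and the heights $(c_i,d)$ as strictly expanding (for $i\geq 2$); note that $c_1$ is not directly constrained by the hypothesis.

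Next I would assign level $j\in\{1,\ldots,m\}$ of the witness to $S_{m-j+1}$: pick $x_1^j<x_2^j$ in $(a_{m-j+1},b_{m-j+1})$ and, for $j\leq m-1$, pick $x_3^j$ in $(c_{m-j+1},b_{m-j})$, which is nonempty by $c_{m-j+1}<b_{m-j}$. The cross-level inequality $x_3^j<x_1^{j+1}$ is then achieved by placing $x_1^{j+1}\in(x_3^j,b_{m-j})\subseteq(a_{m-j},b_{m-j})$. The boundary values are $x_1^0\in(a_{m-1},a_m)$ and $x_3^{m+1}=d$, and $x_3^m$ is chosen above $\max(c_1,x_2^m)$ and below $d$. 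For the verification, I would check for each triple that the two footprint inclusions $(x_p^{i_p},x_p^{i_p+1})\subseteq(a_{m-i_3+1},b_{m-i_3+1})$ ($p=1,2$) follow from monotonicity of $(a_i)$ and $(b_i)$ together with $i_1,i_2<i_3$, and the height inclusion $(x_3^{i_3},x_3^{i_3+1})\subseteq(c_{m-i_3+1},d)$ follows because $x_3^{i_3}$ was chosen above $c_{m-i_3+1}$.

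The main obstacle is the book-keeping. The interlocking inequalities --- particularly the critical $x_3^j<x_1^{j+1}$, which uses exactly the hypothesis $c_{i+1}<b_i$ --- together with the boundary case $j=m$ (where $c_1$ is not controlled by the hypothesis, so one must separately position $x_3^m$) must all be arranged consistently. Once the total ordering of the first step is in hand, however, every required coordinate inclusion reduces to a routine application of the monotonicity of $(a_i),(b_i),(c_i)_{i\geq 2}$.
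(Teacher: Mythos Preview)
Your proposal is correct and follows essentially the same ``staircase'' construction as the paper's proof: the paper sets $x_0=a_m$, picks $x_i=y_i\in(z_{i-1},b_{m-i+1})$ and $z_i\in(c_{m-i+1},b_{m-i})$ inductively, and then declares the containment check analogous to an earlier lemma. The only differences are cosmetic --- you keep $x_1^j<x_2^j$ distinct where the paper collapses them to a single value $x_i=y_i$, and you are more explicit about the boundary step $j=m$ where $c_1$ is not controlled by the hypothesis.
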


\begin{proof}
We will choose inductively numbers
\[x_0,x_1,y_1,z_1,\ldots, x_m,y_m,z_m,z_{m+1}\]
that witness that rank of $S$ is at least $m$ as in Definition \ref{def2}. Let $x_0=a_m$, and choose $x_1=y_1 \in (a_m,b_m)$. Then, choose $z_1 \in (c_m,b_{m-1})$. Having chosen $x_{i-1}$, $y_{i-1}$, $z_{i-1}$, choose $x_i=y_i \in (z_{i-1},b_{m-i+1})$ and then choose $z_i \in (c_{m-i+1},b_{m-i})$. Set $z_{m+1}=d$. Notice that
\[x_0<x_1 =y_1 <z_1 <x_2=y_2 <z_2<\cdots <x_m=y_m<z_m<z_{m+1}\]
Checking that these values witness that $\rank(S) \geq m$ is similar to checking the containments at the end of the proof of Lemma \ref{lem5}, so we omit the proof.
\end{proof}

Lemma \ref{lem6} has the following corollary that allows one to augment patterns in a certain way.

\begin{cor}\label{cor1}
Assume $F$ is so that $\rank(F^c)<\infty$. Let $P$ be an essential pattern for $F$ of size $n$ and let $k< n$. Then the pattern
\[\Phi_{(k,2)}(P) \cup \{(k,k+1,n+2)\}\]
%and
%\[\Phi_{(0,1),(k,2)}(P) \cup \{(0,k+1,k+2)\}\]
is essential for $F$.
\end{cor}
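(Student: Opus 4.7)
The plan is to mimic the strategy of Lemma \ref{lem1}, with Lemma \ref{lem6} playing the role that monotonicity of rank plays there. Fix $a<b$ and suppose toward contradiction that $F\cap D^3_{a,b}$ contains no copy of $P' := \Phi_{(k,2)}(P) \cup \{(k,k+1,n+2)\}$. Set $M=\rank(F^c)<\infty$; the goal is to extract an open subset of $F^c$ of rank $\geq M+1$, which will contradict the choice of $M$.

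First I would inductively build copies $\sigma_1,\ldots,\sigma_{M+1}$ of $P$, each nested inside the previous essential simplex. Set $a_0=a$, $b_0=b$, and given $(a_{i-1}, b_{i-1})$, use essentiality of $P$ to pick $\sigma_i : O(P) \to F \cap D^3_{a_{i-1}, b_{i-1}}$. Record
\[ a_i = f_{\sigma_i}(k-1),\quad b_i = f_{\sigma_i}(k),\quad c_i = f_{\sigma_i}(n-1). \]
Because $f_{\sigma_i}$ is strictly order-preserving with image in $(a_{i-1}, b_{i-1})$, the chain $a_1 < a_{i+1} < b_{i+1} \leq c_{i+1} < b_i$ and $b_1 < b$ holds---exactly the hypothesis of Lemma \ref{lem6} with $d=b$ in the generic range $0<k<n-1$; the edge cases $k=0$ and $k=n-1$ require obvious adjustments to one of $a_i$ or $c_i$ but are otherwise identical.

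The crux is to show that for every $i$ the open set
\[ B_i := \{(x,y,z) \in \Delta^3 : x, y \in (a_i, b_i),\ z \in (c_i, b),\ x < y\} \]
is disjoint from $F$. If some $(x,y,z) \in F$ sat in $B_i$, define $\sigma_i' : O(P') \to \Delta^3$ by $\sigma_i'(\Phi_{(k,2)}(v)) = \sigma_i(v)$ for $v \in O(P)$ and $\sigma_i'((k,k+1,n+2)) = (x,y,z)$. Since $\Phi_{(k,2)}$ is order-preserving, $\sigma_i'$ restricted to $\Phi_{(k,2)}(O(P))$ inherits from $\sigma_i$ all pattern relations; and the inequalities $f_{\sigma_i}(k-1) < x < y < f_{\sigma_i}(k)$ and $f_{\sigma_i}(n-1) < z < b$ place $(x,y,z)$ exactly in the order slot encoded by $\{k,k+1,n+2\}$. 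Thus $\sigma_i'$ would be a copy of $P'$ in $F\cap D^3_{a,b}$, contradicting the standing assumption.

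Consequently $\bigcup_{i=1}^{M+1} B_i \subseteq F^c$, and the rank-$(M+1)$ witness produced by Lemma \ref{lem6}---which by Definition \ref{def2} lies in the strictly-ordered interior $\{x<y<z\}$ of $\Delta^3$, hence inside $\bigcup B_i$---sits in $F^c$. So $\rank(F^c) \geq M+1$, the desired contradiction. The step requiring most care is the verification that $\sigma_i'$ really is a copy of $P'$: one must check $p(\sigma_i'(\alpha)) < q(\sigma_i'(\beta))$ for every order relation dictated by $P'$, split by cases on whether $\alpha,\beta$ lie in $\Phi_{(k,2)}(O(P))$ or are the new triple $(k,k+1,n+2)$, and on the choice of projections $p,q$. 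This is routine given the definition of $\Phi_{(k,2)}$, but is where all the bookkeeping of the argument lives.
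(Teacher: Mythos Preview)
Your proof is correct and follows essentially the same route as the paper's: both nest $M$ (or $M{+}1$) copies of $P$ so that the $(i{+}1)$-st copy sits entirely in the interval $(f_{\sigma_i}(k-1),f_{\sigma_i}(k))$, extract the triples $a_i,b_i,c_i$ exactly as you do, and then invoke Lemma~\ref{lem6}. The only difference is presentational: the paper encodes the nesting into a single auxiliary pattern $K_M$ (built from $P$ via the $\oplus$ operation), takes one copy of $K_M$, and argues directly that the rank-$M$ set $S$ from Lemma~\ref{lem6} must meet $F$; you instead build the nested copies one at a time using essentiality of $P$ and phrase the endgame as a contradiction. Your version is marginally cleaner in that it sidesteps the $K_i$ bookkeeping and the ``easy observation'' about $\oplus$ preserving essentiality.
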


\begin{proof}
Let $M$ be so that every set of rank at least $M$ intersects $F$. Let $P$ be an essential for $F$ pattern of size $n$. Fix $0\leq e<f\leq 1$. Set $P'=\Phi_{(k,2)}(P) \cup \{(k,k+1,n+2)\}$; we want to show that there is a copy of $P'$ in $F \cap D_{e,f}$.

We will inductively define patterns $K_1, \ldots, K_M$. Let $K_1=P$. Given $K_i$, define
\[K_{i+1}=\left(K_i\right)_{(ik,n)}\oplus P_{n}\]
Clearly $K_1$ is essential and
\[K_i \textrm{ essential }\implies K_{i+1} \textrm{ essential }\]
because $P$ is essential. So $K_M$ is essential.

Now let $\sigma: O(K_M) \to F \cap D_{e,f}$ be a copy of $K_M$ and let $f_\sigma :[Mn] \to [0,1]$ be the associated order-preserving function. We let $\sigma^{(i)}: O(P) \to F \cap D_{e,f}$ be the copy of $P$ in $\sigma$ added in the construction of $K_i$ from $K_{i-1}$.

For $i=1, \ldots, M$ set:
\begin{align*}
    & a_i=f_\sigma (ik-1)=f_{\sigma^{(i)}}(k-1)\\
    & b_i=f_{\sigma} (ik+n(M-i)-1)=f_{\sigma^{(i)}}(k)\\
    & c_i=f_\sigma ((M-i+1)(n)+(i-1)(k)-1)=f_{\sigma^{(i)}}(n-1)\\
\end{align*}
and set $d=f$.

Now Lemma \ref{lem6} implies that the set
\[S= \bigcup_{i=1}^M (a_i,b_i) \times (a_i,b_i) \times (c_i,d)\]
has rank at least $M$ and so intersects $F$. This means there exists $j\leq m$ and $(x,y,z) \in F$ so that $x,y \in (a_j,b_j)$ and $z \in (c_j,d)$. 

Then, one checks that 
\[\sigma':O(P') \to F\]
given by 

\[
\sigma'(u)=
\begin{cases}
 f_{\sigma_j}(v) & \textrm{ if }u=\Phi_{(k,2)}(v)\\
 (x,y,z) & \textrm{ if }u=(k,k+1,n+2)\\
\end{cases}
\]
is a copy of $P'$ in $F \cap D_{e,f}$.
\end{proof}

We can now prove Theorem \ref{thm3}.

\begin{proof}[Proof of Theorem \ref{thm3}]
Assume that $F \subset \Delta^2$ is such that $\rank(F^c) <\infty$. Throughout the proof of the Theorem, whenever we say that a pattern is ``essential," we mean that it is ``essential for $F$". Let $M \in \N$ so that each set of rank at least $M$ intersects $F$. We want to show that $\{0,2,4\},\{1,3,5\}$ is essential. Let $P$ be the singleton pattern $\{0,1,2\}$. We prove the following statement by induction on $m$:

\begin{claim}\label{claim3}
For each $m \in \N$, if $Q$ is any essential pattern (of size $n$) and $k \leq n$, then the pattern
\[\left(\textrm{Ch}^m_{1,2}(P)\right)_{m-1}^{m+1}\oplus Q_k\]
is essential.
\end{claim}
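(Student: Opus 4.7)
The plan is to prove Claim \ref{claim3} by induction on $m$, using Corollary \ref{cor1} as the base-case engine and Lemma \ref{lem5} to drive the inductive step. Throughout, write $\mathcal{C}_m(Q,k) := (\textrm{Ch}^m_{1,2}(P))_{m-1}^{m+1} \oplus Q_k$ and $N := |\mathcal{C}_m(Q,k)| = 3m + n$.

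\textbf{Base case ($m=1$).} Expanding the $\Phi$-operators directly gives $\mathcal{C}_1(Q,k) = P_0^2 \oplus Q_k = \Phi_{(k,2)}(Q) \cup \{\{k,k+1,n+2\}\}$, which is exactly the essential pattern produced by Corollary \ref{cor1} when $k < n$. The boundary case $k = n$ yields the pattern $Q \cup \{\{n,n+1,n+2\}\}$; given $a<b$, first pick a copy of $Q$ inside $F \cap D^3_{a,b}$ by essentiality of $Q$, then extend by any point of $F$ in the essential 3-simplex lying above this copy (such a point exists because $\rank(F^c) < \infty$ forces every essential 3-simplex to meet $F$).

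\textbf{Inductive step.} Assume the claim for $m$, fix $Q$ essential of size $n$ and $k \leq n$. Unfolding the recursive definition $\textrm{Ch}^{m+1}_{1,2}(P) = (\textrm{Ch}^m_{1,2}(P))_m^{m+1} \oplus P_2$ and pushing through the $\Phi$'s yields the key identity
\[
\mathcal{C}_{m+1}(Q,k) \;=\; \Phi_{(0,1),(N-1,1)}\bigl(\mathcal{C}_m(Q,k)\bigr) \;\cup\; \{\{0,\,N,\,N+2\}\},
\]
which is precisely the output of Lemma \ref{lem5} applied to the essential (by IH) pattern $P^\ast := \mathcal{C}_m(Q,k)$ with parameters $j = 0$ and $k^\ast = N - 1$. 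Hence it suffices to verify Lemma \ref{lem5}'s hypothesis: that $\textrm{Ch}^l_{0,N-1}(P^\ast)$ is essential for every $l \in \N$.

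\textbf{The main obstacle} is this chain-essentiality statement, since the ``easy observation'' (that gluing an essential pattern at its final index preserves essentiality) only handles $i = |Q|$, while the chain construction here splits $P^\ast$ at position $N-1 \neq N$. My intended strategy is a secondary induction on $l$ that absorbs each successive copy of $P^\ast$ into an enlarged essential auxiliary pattern built from $Q$ and $P$ via iterated applications of Corollary \ref{cor1}, and then invokes the main inductive hypothesis applied to $\mathcal{C}_m(Q',k')$ for an appropriate essential $Q'$. The delicate part is carrying out the $\Phi$-bookkeeping so that the enlarged auxiliary patterns produced at each chain step retain a form whose essentiality is already available, and matching the positional shifts introduced by $\Phi_{(0,1),(N-1,1)}$ against those dictated by the $\mathcal{C}_m$-template; this, together with the verification of the boxed identity above, is the technical heart of the argument.
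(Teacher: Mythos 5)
Your base case matches the paper's (and is actually more careful than the paper's one-line appeal to Corollary \ref{cor1}, since you handle the boundary $k=n$ explicitly). For the inductive step, however, you take a genuinely different route from the paper, and the route has a real gap.

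Your plan is to exhibit $\mathcal{C}_{m+1}(Q,k)$ as the output of Lemma \ref{lem5} applied to $P^*:=\mathcal{C}_m(Q,k)$ with $j=0$ and $k^*=N-1$. That reduction is formally sound, \emph{provided} you can verify the hypothesis of Lemma \ref{lem5}: that $\textrm{Ch}^l_{0,N-1}(P^*)$ is essential for every $l$. You correctly flag this as the main obstacle, but the sketch you offer (``secondary induction on $l$ absorbing each copy into an enlarged auxiliary pattern... invoking the main inductive hypothesis for an appropriate $Q'$'') does not close the gap. The chain construction at position $N-1 < N$ is a genuine non-endpoint insertion, so the ``easy observation'' does not apply, and it is not at all apparent that $\textrm{Ch}^l_{0,N-1}(\mathcal{C}_m(Q,k))$ can be rewritten as $\mathcal{C}_m(Q',k')$ for an already-known-essential $Q'$; the positional arithmetic does not obviously line up. Establishing essentiality of these chains looks to be at least as hard as Claim \ref{claim3} itself, so this is a missing idea rather than deferred bookkeeping. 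The paper sidesteps exactly this difficulty: instead of reducing to Lemma \ref{lem5}, it builds auxiliary patterns $K_1,\dots,K_M$ by an explicit recursion involving \emph{both} $\textrm{Ch}^m_{1,2}(P)$ and $Q$, whose essentiality follows directly from the ``easy observation'' together with the inductive hypothesis at level $m$, and then applies Lemma \ref{lem6} to the resulting high-rank box union to extract the needed point. In other words, the paper never needs chain-essentiality of $\mathcal{C}_m(Q,k)$ at all.

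One further caution: the boxed identity $\mathcal{C}_{m+1}(Q,k) = \Phi_{(0,1),(N-1,1)}(\mathcal{C}_m(Q,k)) \cup \{\{0,N,N+2\}\}$ is true as an equality of patterns (I verified it for $m=1,2$), but it does not hold by a naive composition of the underlying $\Phi$-maps on $[3m]$; the composite maps $\Phi_{(m,k),(m+2,n-k)}\circ\Phi_{(m,2),(m+1,1)}$ and $\Phi_{(0,1),(N-1,1)}\circ\Phi_{(m-1,k),(m+1,n-k)}$ are different functions, and the identity only holds because a translation symmetry of the chain permutes which triple lands where. So this identity itself requires a nontrivial induction to justify, which you should supply before relying on it.
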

\begin{proof}[Proof of Claim \ref{claim3}]
The base case of $m=1$ is Corollary \ref{cor1}.

Suppose that the Claim holds for some $m \in \N$. We want to prove the Claim for $m+1$. So fix an essential pattern $Q$ of size $n$ and $k\leq n$. Fix also $0\leq e<f\leq 1$; we want to show that there is a copy of
\[\left(\ch_{1,2}^{m+1}(P)\right)_{m}^{m+2}\oplus Q_k\]
in $F \cap D_{e,f}$.

First we will construct by induction patterns $K_i$ for $i=1, 2,\ldots, M$ as follows.

Set 
\[K_1=\left(\ch_{1,2}^m(P)\right)_{m}^{3m} \oplus Q_{n}\]
and set

\[K_2=\left( \ch_{1,2}^m(P)_{m-1}^{m+1} \oplus \left(K_1\right)_{m+k}\right)_{2m+k}\oplus Q_{n}\]
and continue on in this way, so that, precisely:
\[K_i=\left(\ch_{1,2}^m(P)_{m-1}^{m+1} \oplus \left(K_{i-1}\right)_{(i-1)(m+k)}\right)_{(i)(m)+(i-1)(k)}\oplus Q_{n}\]

We have that $K_1$ is essential because $\ch_{1,2}^m(P)$ and $Q$ are essential. Given that $K_{i-1}$ is essential,
\[\ch_{1,2}^m(P)_{m-1}^{m+1} \oplus \left(K_{i-1}\right)_{(i-1)(m+k)}\]
is essential by an application of the induction hypothesis (that the Claim holds at $m$) and the fact that $K_{i-1}$ is essential. Then:
\[K_i=\left(\ch_{1,2}^m(P)_{m-1}^{m+1} \oplus \left(K_{i-1}\right)_{(i-1)(m+k)}\right)_{(i)(m)+(i-1)(k)}\oplus Q_{n}\]
is essential by the fact that $Q$ is essential.

So, in fact $K_M$ is essential. Take a copy $\sigma:O(K_M) \to F \cap D_{e,f}$. For $i=1,\ldots,M$, let $\sigma^{(i)}: O(P') \to F \cap D_{e,f}$
where 
\[P'=\left(\ch_{1,2}^m(P)\right)_{m}^{3m}\oplus Q_{n}\]
and $\sigma^{(i)}$ is the copy of $P'$ added to $K_{i-1}$ to form $K_i$ within $\sigma$. To be precise,
\[\sigma^{(1)} = \sigma \circ \Phi_{(0,(m-1)(M-1)), (m+k,(n+2)(M-1))}\]
and for $i=2,\ldots, M$
\[\sigma^{(i)}= \sigma \circ T_{(M-i)(m-1)} \circ \Phi_{(m-1, (i-1)(m+k)), (m+n+1, (i-1)(2m+n-k))}\]

For $i=1,\ldots,M$, set
\begin{align*}
    & a_i=f_{\sigma^{(i)}}(m+k-1)\\
    & b_i=f_{\sigma^{(i)}}(m+k)\\
    & c_i=f_{\sigma^{(i)}}(m+n)\\
\end{align*}
and set $d=f_{\sigma^{(1)}}(m+n+1)$. One can check that the conditions of Lemma \ref{lem6} are satisfied and so there exists $1\leq i\leq M$ and $(x,y,z) \in F$ so that $x,y \in (a_i,b_i)$ and $z \in (c_i,d)$. That is:
\[x,y \in (f_{\sigma^{(i)}}(m+k-1), f_{\sigma^{(i)}}(m+k))\]
and 
\[z \in (f_{\sigma^{(i)}}(m+n),d) \subseteq (f_{\sigma^{(i)}}(m+n), f_{\sigma^{(i)}}(m+n+1))\]
Now there is a copy $\tau$ of the pattern $\left(\ch_{1,2}^{m+1}(P)\right)_{m}^{m+2}\oplus Q_k$ in $F \cap D_{e,f}$ given by

\[
\tau(u)=
\begin{cases}
 \sigma_i(v) & \textrm{ if }u=\Phi_{(m+k,2),(m+n+1,1)}(v)\\
 (x,y,z) & \textrm{ if }u= (m+k,m+k+1,m+n+3)\\
\end{cases}
\]
\end{proof}
Claim \ref{claim3} with $Q$ taken to be the empty pattern obviously implies that for any $m$, $\ch_{1,2}^m(P)$ is essential; so by Lemma \ref{lem5}, the pattern $\{0,2,4\},\{1,3,5\}$ is essential.
\end{proof}


\begin{thebibliography}{1}

\bibitem{kechris} A. S. Kechris, Classical descriptive set theory. Graduate Texts in Mathematics, \textbf{156}. Springer-Verlag, New York, 1995. 

\bibitem{kpt} A. S. Kechris, V. Pestov, and S. Todorcevic, Fraissé limits, Ramsey theory, and topological dynamics of automorphism groups. \emph{Geom. Funct. Anal.} {\bf 15} (2005), no. 1, 106--189.

\bibitem{pestov} V. Pestov, On free actions, minimal flows, and a problem by Ellis. \emph{Trans. Amer. Math. Soc.} {\bf 350} (1998), no. 10, 4149--4165.

\bibitem{uspenskij} V. Uspenskij, On extremely amenable groups of homeomorphisms. \emph{Topol. Proc.} {\bf 33} (2009), 1--12.

\end{thebibliography}
\end{document}